\numberwithin{equation}{section}
\newif\ifdraft\drafttrue
\newcommand\hd{Hausdorff dimension}
\newcommand\eq[2]{{\ifdraft{\ \tt [#1]}\else\ignorespaces\fi}\begin{equation}\label{eq:#1}{#2}\end{equation}}
\newcommand {\equ}[1]     {\eqref{eq:#1}}
\newcommand{\R}{{\mathbb {R}}}
\newcommand{\Z}{{\mathbb {Z}}}
\newcommand{\N}{{\mathbb {N}}}
\newcommand{\Ad}{{\operatorname{Ad}}}
\newcommand{\ad}{{\operatorname{ad}}}
\newcommand{\Tr}{\operatorname{Tr}}
\newcommand{\SL}{\operatorname{SL}}
\newcommand{\ggm}{G/\Gamma}
\newcommand{\diam}{\operatorname{diam}}
\newcommand{\dist}{\operatorname{dist}}
\newcommand{\diag}{\operatorname{diag}}
\newcommand{\supp}{\operatorname{supp}}
\newcommand{\Lie}{\operatorname{Lie}}
\newcommand{\Leb}{\operatorname{Leb}}
\newcommand {\ignore}[1]  {}
\newcommand\hs{homogeneous space}
\newcommand\cic{C^\infty_{comp}}
\newcommand{\df}{{\, \stackrel{\mathrm{def}}{=}\, }}
\newcommand{\vs}{{\bf{j}}}
\newcommand{\vr}{{\bf i}}
\newcommand {\comm}[1]   {\textcolor{red}{#1}}
\newcommand {\gr}[1]     {\textcolor{green}{#1}}
\DeclareMathOperator{\codim}{codim}
\DeclareMathOperator{\spn}{span}
\newcommand{\vre}{\varepsilon}
\newtheorem{thm}{Theorem}[section]
\newtheorem{lem}[thm]{Lemma}
\newtheorem{prop}[thm]{Proposition}
\newtheorem{cor}[thm]{Corollary}
\newtheorem{defn}[thm]{Definition}
\newtheorem{claim}[thm]{Claim}
\newcommand{\mc}{\mathcal}
\renewcommand{\a}{\alpha}
\renewcommand{\b}{\beta}
\renewcommand{\l}{\lambda}
\newcommand{\set}[1]{\left\{#1\right\}}
\renewcommand{\r}{\rightarrow}
\def\multiset#1#2{\ensuremath{\left(\kern-.3em\left(\genfrac{}{}{0pt}{}{#1}{#2}\right)\kern-.3em\right)}}
\begin{document}

\title[Dimension estimates
for  escape on average]
{Dimension bounds for escape on average \\ in homogeneous spaces}
\author{Dmitry Kleinbock}
\address{Department of Mathematics, Brandeis University, Waltham MA}
\email{kleinboc@brandeis.edu}

\author{Shahriar Mirzadeh}
\address{Department of Mathematical Sciences, University of Cincinnati, Cincinnati OH}

 \email{mirzadsr@ucmail.uc.edu}
\date{September 2023}

\thanks{The first named author   was supported by NSF grant  DMS-2155111 and by a grant from the Simons Foundation (922989, Kleinbock).}

\begin{abstract} 
Let $X = \ggm$, where $G$ is a Lie group and $\Gamma$ is a  uniform lattice in $G$, and let $O$ be an open subset of $X$. We give  {an upper estimate  for the \hd\ of the} set of points whose trajectories escape $O$ on average with frequency $\delta$, where $0 < \delta \le 1$.
\end{abstract}



\maketitle
\section{Introduction}
Throughout the paper  we let  $G$ be a Lie group and $\Gamma $   a 
lattice in $G$,
denote by $X$ the homogeneous space  $G/\Gamma$ and by $\mu$   the
$G$-invariant probability measure on $X$. Let ${F^+:= }{({g_t})_{t
\ge 0}}$ be a one-parameter 
{sub}semigroup of $G$.  For a 
subset $O$ of $X$, define
 $$\widetilde E(F^+,O) := \big\{ x \in X: \exists \ t_0\text{ such that }g_tx \notin
 O \ \forall\, t\ge t_0\big\}
 $$
 to be the set of $x\in X$ {\sl eventually escaping} $O$ under the action of $F^+$. 
If the $F^+$-action on $(X,\mu)$ is ergodic, then it follows from Birkhoff's Ergodic Theorem that $\mu\big(\widetilde E(F^+,O)\big) = 0$ as long as $O$ has positive measure. Furthermore, under some additional assumptions 
{one can obtain}  upper estimates for the \hd\ of $\widetilde E(F^+,O)$. {More precisely, one 
{has} the following `Dimension Drop Conjecture', originated from a question asked by Mirzakhani {(private communication)}:
{\it if 
$F\subset G$ is a subsemigroup and $O$ is an open subset of $X$, then either $\widetilde E(F,O)$ has positive measure, or its \hd\ is less than the dimension of $X$.}
When $X$ is compact it follows from the variational principle for measure-theoretic entropy, as outlined in \cite[\S 7]{KW}; an effective argument using exponential mixing was developed in \cite{KMi1}. See 
{\cite{EKP} (resp.,   \cite{KMi2, KMi3}) for the proof of this conjecture for Lie groups of  rank one (resp., higher rank with some additional assumptions).}

{In this work we consider trajectories which are allowed to enter $O$, but not as frequently as mandated by Birkhoff's Ergodic Theorem. Namely, for $0 < \delta \le 1$  let us say that a point $x \in X$ \emph{$\delta$-escapes $O$ on average under the action of $F^+$} if it belongs to the set
$$E_{\delta}(F^+,O):=\left\{x \in X: \limsup_{T \rightarrow \infty} \frac{1}{T} \int_0^T 1_{O^c}(g_tx)dt \ge \delta\right\}. $$
Again under the assumption of  ergodicity 
for any 
measurable  $O\subset X$  
Birkhoff's Ergodic theorem  implies that
$$\lim\limits_{T\rightarrow\infty}\frac{1}{T}\int_0^T 1_{O^c}(g_tx)\,dt = \mu (O^c).$$
Hence, the set $E_\delta(F^+,O)$ has full measure for any $0< \delta \le \mu (O^c)$, and has measure  zero for any $\delta > \mu(O^c)$.
Clearly for any $\delta$ as above one has $$E_\delta(F^+,O)\supset  E_1(F^+,O) \supset  \widetilde E(F^+,O).$$ Thus one can ask the following questions: under some assumptions, can it be shown  that
      the set $E_1(F^+,O)$ has less than full \hd? is the same true for $E_\delta(F^+,O)$ for some $\delta < 1$?


\ignore{Here is a special case of our main result:
\begin{thm}
\label{Special Case} Let $G$ be a connected semisimple Lie group with finite center and with no compact factors, $\Gamma $   a uniform 
irreducible lattice in $G$, $X = G/\Gamma$, and let ${F^+}$ be a one-parameter {$\Ad$-}diagonalizable
{sub}semigroup of $G$. Then for any $O\subset X$ with non-empty interior there exists $\delta_O<1$ such that
 for any $\delta > \delta_O$, the \hd\ of $E_{\delta}(F^+,O)$ is strictly smaller than the dimension of $X$.
\end{thm}
In fact we present an effective way to estimate the \hd\ of $E_{\delta}(F^+,O)$ from above, as well as an explicit bound for $\delta_O$.}

\smallskip


{To state 
our results, we need to introduce some notation.
If $X$ is a metric space with metric `${\dist}$', $O$ is a subset of $X$ and $r > 0$, we will denote by $\sigma_r O$ the {\sl inner $r$-core\/} of $O$, defined as
$$
\sigma_r O :=  \{x\in X: {\dist}(x,O^c) > r\},$$
and by $O^{(r)}$ the  {\sl $r$-neighborhood\/}   of $O$, that is, 
 $$
O^{(r)} :=  \{x\in X: {\dist}(x,O) < r\}.
$$}
The notation ${A\gg B}$,
where  $A$ and $B$ are quantities depending on certain parameters, will mean ${A \ge  CB}$,  
where {$C$ is a constant} 
independent  
on those parameters. \hd\ (see Definition \ref{defhd}) will be denoted by `$\dim$', and $\codim A$ will stand for the Hausdorff codimension of a set $A$, that is, the difference between the dimension of the ambient space and $\dim A$.

Fix a {right-invariant} Riemannian {structure  on {a Lie group} $G$, and denote by
`${\dist}$' the corresponding Riemannian metric},
  {using} the same notation for the induced metric on homogeneous spaces of $G$. 
  {In what follows, if $P$ is a subgroup of $G$, we will denote by $B^{P}(r)$  the open ball of radius $r$ centered at the identity element with respect to the  
metric  on $P$ corresponding to the Riemannian structure induced from $G$,}
and by $\nu$  the Haar measure on $P$ induced by this metric.  $B(x,\rho)$ will stand for the open ball  in $X$ centered at $x\in X$ of radius $\rho$. 


Throughout this paper we shall assume that $\Gamma$ is a uniform lattice in $G$, that is, $X$ is compact. We note that the methods and results of this paper can be extended to the noncompact case, however 
this would require an extra effort controlling the escape of mass and will be pursued in subsequent work.

{For comparison, let us first state the   effective dimension estimate from  \cite{KMi1}. There the main tool was the} 
exponential mixing of the action. 
Namely, let us say that the flow $(X,F^+)$ is {\sl exponentially
mixing}  if there exist {$\gamma  > 0$ and $\ell\in\Z_+$}
such that for any   $\varphi, \psi \in  C^\infty (X) $
 and for any $t\ge 0$ one has
{\eq{em}{\left| {({g_t}\varphi ,\psi )- \int_X\varphi\, d\mu \int_X\psi\, d\mu}\right|   \ll  {e^{ - \gamma t}}{\left\| \varphi  \right\|_{\ell}} {\left\| \psi  \right\|_{\ell}}.}}
Here and hereafter ${\|\cdot \|_{\ell}}$  stands for the {\sl ``$L^{{2}}$, order $\ell$"} Sobolev norm, see 
{e.g.\ \cite[\S2]{KMi1} for a definition and basic facts}.

\smallskip
{The following is a special case of \cite[Theorem 1.1]{KMi1}:

\begin{thm}
\label{Main Theorem} Let $G$ be a Lie group, $\Gamma $   a uniform\footnote{This is a simplified version of the theorem; in  \cite{KMi1} instead of the compactness of $X$ it was assumed that the complement of $O$ was compact. The latter assumption has been removed in \cite{KMi2, KMi3} in many special cases.}
lattice in $G$, $X = G/\Gamma$, and let ${F^+}$ be a one-parameter {$\Ad$-}diagonalizable
{sub}semigroup of $G$ 
whose action on $X$ is exponentially mixing.
Then there {exists $ r_1 > 0$}
such that for any 
  $O\subset X$ 
  and
any
{$0 < r < 
 r_1
 $}
one has\footnote{The result in \cite{KMi1} actually involved the slightly smaller set $$E(F^+,O) := \{ x \in X: g_tx \notin
 O \ \forall\, t\ge 0\}
 $$ of points whose trajectories avoid $O$, bit it is easy to see that $\dim E(F^+,O) =  \dim \widetilde E(F^+,O)$.}
 \eq{mainbound}{ {
 \codim \widetilde E(F^+,O)  {\,\gg}\ }\frac{{\mu ({\sigma_{r}O)}}}{{\log (1/r) + \log \big(1/\mu ({\sigma_{r}O)\big)}}}\,.}
\end{thm}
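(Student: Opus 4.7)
The plan is first to reduce the problem to estimating the Hausdorff dimension of
\[
E(F^+,O) := \{x\in X : g_tx\notin O\text{ for all }t\ge 0\}.
\]
Since $\widetilde E(F^+,O) = \bigcup_{n\in\N} g_{-n}\,E(F^+,O)$ and Hausdorff dimension is stable under countable unions and diffeomorphisms, the two sets have equal dimension. Next, working locally via the decomposition $G = UH$ near the identity --- where $U = \exp\mathfrak{u}$ is the unstable horospherical subgroup of $g_{t_0}$, with a uniform expansion rate $\lambda > 0$ on $\mathfrak{u}$, and $H$ is a transversal carrying the stable and neutral directions --- I would invoke Marstrand-type slicing: if $\codim_U\bigl(E(F^+,O) \cap Ux\bigr) \ge c$ uniformly in $x$, then $\codim_X E(F^+,O) \ge c$ as well.

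The second step is a Cantor-type cover of $E(F^+,O) \cap B^U(r_0)x$ built generation by generation, for $r_0$ a fixed small radius. Define
\[
E_k := \{u\in B^U(r_0) : g_{jt_0}ux\notin \sigma_r O \text{ for all }0\le j\le k\},
\]
and cover $E_k$ by a bounded-multiplicity family of balls $B^U(r_0 e^{-\lambda k t_0})$. A generation-$k$ ball survives to generation $k+1$ iff its image under $g_{(k+1)t_0}$ avoids $\sigma_r O$; the scales are coordinated so that this image has unstable radius $\asymp r$ (up to bounded distortion), matching the scale of the hole. The key quantitative input comes from exponential mixing: approximating $\mathbf 1_{B^U(\rho)y}$ and $\mathbf 1_{\sigma_r O}$ by smooth bumps $\psi_\rho,\varphi_r$ at scale $\asymp r$ and applying \equ{em} yields
\[
\int_X \psi_\rho\cdot(\varphi_r\compose g_t)\,d\mu = \mu(\sigma_r O)\,\mu(\supp\psi_\rho) + O\bigl(e^{-\gamma t}\,r^{-A}\bigr)
\]
for some $A$ depending on $\ell$ and $\dim G$ (the factor $r^{-A}$ reflecting the Sobolev cost of smoothing). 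Hence the proportion of generation-$k$ balls killed at generation $k+1$ is at most $\mu(\sigma_r O) + O(e^{-\gamma t_0}r^{-A})$.

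A standard Hausdorff $s$-content estimate on the surviving cover then gives
\[
\codim_U\bigl(E(F^+,O) \cap Ux\bigr) \;\gg\; \frac{-\log\bigl(1-\mu(\sigma_r O)\bigr)}{\lambda t_0} \;\asymp\; \frac{\mu(\sigma_r O)}{t_0}.
\]
Choosing $t_0$ just large enough to absorb the mixing error into the main term $\mu(\sigma_r O)$ forces $t_0 \asymp \gamma^{-1}\bigl(\log(1/r) + \log(1/\mu(\sigma_r O))\bigr)$; substituting into the previous display yields \equ{mainbound}. The main obstacle is precisely this scale coordination: the $r$-scale smoothing of $\mathbf 1_{\sigma_r O}$ generates Sobolev norms that blow up polynomially in $1/r$, and balancing these against the already small main term $\mu(\sigma_r O)$ is exactly what produces the logarithmic denominator in the claimed bound.
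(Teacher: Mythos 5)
Your outline follows the same strategy as the proof in \cite{KMi1} that this paper merely quotes (Theorem \ref{Main Theorem} is not reproved here, but Sections \ref{aux}--\ref{proof} run the identical machinery for the averaged version): reduce $\widetilde E$ to $E$ by a countable union, slice along the unstable leaf and control the transverse directions by a product/slicing theorem, build a Cantor-type cover at times $kt_0$, kill a definite proportion of children at each generation using mixing, and optimize $t_0$ against the Sobolev cost $r^{-A}$ to produce the logarithmic denominator. That final balancing, with $t_0\asymp \gamma^{-1}\bigl(\log(1/r)+\log(1/\mu(\sigma_rO))\bigr)$, is exactly what yields \equ{mainbound}.

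Two points need repair. Most substantively, your displayed correlation estimate $\int_X\psi_\rho\cdot(\varphi_r\circ g_t)\,d\mu=\mu(\sigma_rO)\,\mu(\supp\psi_\rho)+O(e^{-\gamma t}r^{-A})$ is a statement about the measure $\mu$ on $X$ of a thickened set, whereas the induction on generations requires a bound on the \emph{Haar measure on $U$} of $\{u\in B^U(\rho): g_{t_0}uy\in\sigma_rO\}$, i.e.\ effective equidistribution of the expanding translate of a single unstable plaque. Passing from the former to the latter is the thickening argument: one foliates a neighborhood in $X$ by translates $\tilde h\,B^U(\rho)y$ with $\tilde h$ in a small ball of the stable-plus-neutral subgroup, uses that $g_t\tilde hg_{-t}$ is non-expanding to compare $1_{\sigma_rO}(g_tuy)$ with $1_O(g_t\tilde huy)$, and disintegrates. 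This is precisely what property (EEP) (Definition \ref{subgroup}, Proposition \ref{exponential mixing}) packages and what the paper imports from \cite[Theorem 2.6]{KMi1}; it is the one genuinely nontrivial analytic step, and as written your argument jumps over it. Two smaller issues: the sentence ``the proportion of generation-$k$ balls killed is at most $\mu(\sigma_rO)+\dots$'' has the inequality backwards --- you need the \emph{surviving} proportion to be at most $1-\mu(\sigma_rO)+O(\cdot)$ to get an upper bound on dimension --- and the assumption of a single expansion rate $\lambda$ on $\mathfrak u$ fails in general (the eigenvalues of $\Ad g_1$ on the unstable subalgebra need not have equal moduli), so the generation-$k$ pieces are anisotropic Bowen boxes rather than balls and one needs the conversion bookkeeping of Lemmas \ref{covering} and \ref{coveringballs}; this costs only constants in \equ{mainbound}.
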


An interesting feature of the above estimate is that its left hand side does not depend on $r$ while the right hand side does, and tends to $0$ when $r$ either tends to $0$ or becomes large enough. Thus for applications one is left to seek an optimal value of $r$. For example when $O = B(x,\rho)$ one can take $r = \rho/2$, producing the estimate \eq{balls}{ \codim \widetilde E\big(F^+,B(x,\rho)\big)  {\,\gg}\ \frac{\rho^{\dim X}}{{\log (1/\rho)}}\,.}
\ignore{Note also that the right hand side of  \equ{mainbound} is zero if $O$ has empty interior; and indeed it is known that for many non-empty sets $O$ the 
sets $\widetilde E(F^+,O)$ or orbits eventually avoiding $O$ can have full \hd\ \cite{K1, KW, AGK}. From now on we will assume $O$ to have non-empty interior, and will aim for non-trivial upper bounds for the \hd\ of various exceptional sets related to $O$.}
\smallskip

The main theme of the present paper is replacing the phenomenon of eventual escape by escape on average.
In order to state our main result, we need to introduce the   {function $\phi:[0,1]\times[0,1]\to\R$ by}
\eq{defphi}{
\phi(y,s):= ({ 1-s})  
{\log\frac1{1-\frac y2}}- 
{2({ 1-s})}   \log \frac{1} {{ 1-s}} - s \cdot \log \frac{{{1+\frac y4}}}{s}.            }
{We will employ the convention $0 \cdot\log\frac10 = 0$; this way we can see that $\phi$ is a continuous function on $\R^2$, with \eq{zerovalue}{\phi(y,0) = {\log\frac1{1-\frac y2}}\text{ for all }y,}}
{hence $\phi(y,s) > 0$ for small enough positive $s$ {as long as $y>0$}.}

\smallskip
 The following is {our first main theorem}:


{\begin{thm}\label{average}  {Let $G$, $\Gamma$, $X$ and $F^+$ be as  in Theorem \ref{Main Theorem}.}
Then there exists positive constant {$r_1$} such that for any 
$O\subset X$, any $0<r<{r_1}$ and  any $\delta>0$  one has
\eq{mainestimate}{ \codim E_{\delta}(F^+,O) \gg \frac{{\mu(\sigma_r O) \cdot \phi\big(\mu(\sigma_r O), \sqrt{1-\delta}\big)}}{\log \frac{1}{r}} .}
\end{thm}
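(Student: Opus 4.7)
The plan is to extend the covering-plus-exponential-mixing strategy of Theorem \ref{Main Theorem} from the setting of eventual escape to that of on-average escape. First, I would fix a discretization step $\tau$ of order $\log(1/r)$ (chosen so that the Sobolev-norm loss from smoothing in \equ{em} is absorbed) and work with the iterates of $g_\tau$. For $x\in E_\delta(F^+,O)$ and any $\varepsilon>0$, the definition of $\limsup$ ensures that for infinitely many $N$ at least a $(\delta-\varepsilon)$-fraction of the indices $n\in\{1,\dots,N\}$ satisfy $g_{n\tau}x\notin O$, and therefore (after a standard inflation) also $g_{n\tau}x\notin \sigma_r O$. Thus, up to a null set, $E_\delta(F^+,O)$ is contained in $\bigcap_M\bigcup_{N\ge M}\mathcal{B}_N$, where $\mathcal{B}_N$ is a union, over certain admissible subsets $I\subseteq\{1,\dots,N\}$, of the sets $\{x:g_{n\tau}x\notin \sigma_r O \text{ for all }n\in I\}$.

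The second and most delicate step is a measure estimate for $\mathcal{B}_N$. For each fixed $I$, iterating \equ{em} $|I|$ times against a smoothed indicator of $\sigma_r O$, and absorbing the Sobolev losses into the choice of $\tau$, yields a bound of the form $\ll(1-y/2)^{|I|}$ with $y=\mu(\sigma_r O)$; the factor $1/2$ is what the smoothing costs. Summing over all admissible $I$ with a naive binomial count gives a suboptimal rate, so instead I would employ a two-scale decomposition: partition $\{1,\dots,N\}$ into blocks of some fixed large length $L$, declare a block \emph{good} if $g_{n\tau}x$ lies outside $O$ on at least a $(1-s)$-fraction of times within the block, and observe via a pigeonhole argument (using the overall density $\ge\delta$) that at least a $(1-s)$-fraction of all blocks are good whenever $s^2\ge 1-\delta$. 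The borderline choice $s=\sqrt{1-\delta}$ is what optimizes the bound and is the origin of the square root appearing in $\phi$. Combining the per-block exp-mixing estimate with the enumeration of both good-block positions and within-block avoidance patterns, one arrives at $\mu(\mathcal{B}_N)\ll\exp(-N\phi(y,s))$: the main term $(1-s)\log\tfrac{1}{1-y/2}$ in \equ{defphi} is the exponential-mixing gain per good-block time, while the two correction terms $-2(1-s)\log\tfrac{1}{1-s}$ and $-s\log\tfrac{1+y/4}{s}$ account for the entropies of the two layers of enumeration and the residual contribution of the bad blocks.

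Finally, the measure bound $\mu(\mathcal{B}_N)\ll e^{-N\phi(y,s)}$ is translated into the codimension estimate \equ{mainestimate} by the same covering argument as in \cite{KMi1}: one covers $\mathcal{B}_N$ by preimages under $g_{N\tau}$ of a fine ball cover of $X$, producing a cover by balls whose diameters along the unstable foliation of $g_\tau$ are $\asymp e^{-N\tau\lambda}$, and a Borel--Cantelli-style summation for the $\limsup$ set then bounds the Hausdorff dimension of $E_\delta(F^+,O)$. Since $\tau\asymp\log(1/r)$, this yields \equ{mainestimate}. The main obstacle is the two-scale balancing in the middle step: the block length $L$ must be simultaneously large enough for the cumulative exponential-mixing errors to remain below the main term $(1-y/2)^{|I|}$, and small enough for the pigeonhole count to be tight and produce the sharp coefficient $\sqrt{1-\delta}$; a secondary technical issue is the careful tracking of the smoothing loss, which is what produces the $y/2$ and $y/4$ appearing inside $\phi$.
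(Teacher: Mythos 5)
Your overall architecture --- discretize into blocks, use a pigeonhole argument to show that a $\delta$-escaping orbit must be ``bad'' on a large fraction of blocks, optimize the block threshold at $s=\sqrt{1-\delta}$, pay a binomial-entropy price for the positions of the good blocks, and finish with a covering argument for the $\limsup$ set --- matches the paper's proof of Theorem \ref{dimension drop 3} (Lemma \ref{main lemma}, Lemma \ref{n-choose-k}, Lemma \ref{Auxillary}), and you correctly identify where each term of $\phi$ comes from. However, your central step contains a genuine gap. You propose to bound the measure of $\{x: g_{n\tau}x\notin\sigma_rO\ \forall n\in I\}$ by ``iterating \equ{em} $|I|$ times'' to get $\ll(1-y/2)^{|I|}$. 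Exponential mixing is a two-point correlation estimate; it does not self-improve into an effective bound on the $|I|$-fold intersection $\bigcap_{n\in I}g_{-n\tau}(\sigma_rO)^c$, and no such iteration appears in the paper. This is precisely the difficulty the paper's machinery is built to circumvent: instead of a global measure estimate on $X$, the argument is run \emph{leafwise} along a subgroup $P$ of the unstable horospherical group satisfying the effective equidistribution property (EEP), and the multiplicative gain per good block is extracted from the tree structure of Bowen $(NT,r)$-boxes. At each generation, one applies effective equidistribution of the translate $g_T V_r\cdot(\gamma g_{(N-1)T}x)$ --- a single two-point estimate at a \emph{translated base point} --- together with Markov's inequality (Proposition \ref{exponential mixing1} and Corollary \ref{sub count}), and the induction in Proposition \ref{delta} multiplies these one-step counts.

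A second, related gap is the final step: a measure bound $\mu(\mathcal B_N)\ll e^{-N\phi}$ does not by itself yield a cover of $\mathcal B_N$ by few balls of radius $e^{-\lambda N\tau}$; a set of tiny measure can still require a full-dimensional number of balls to cover. What Lemma \ref{Auxillary} needs is a \emph{counting} statement, and this is exactly what the Bowen-box induction supplies (together with Lemma \ref{coveringballs} to convert Bowen boxes into metric balls, and Wegmann's product theorem to pass from the leaf $Px$ back to $X$). So the two missing ingredients are really one: you must organize the estimate as a count of Bowen boxes surviving each block, rather than as a measure bound on $X$ obtained by iterated mixing. As written, the second step of your proposal would not go through.
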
  
Thus, if given a subset $O$ of $X$ we define
\eq{fd}{\delta_O:=\inf \left\{ 0<\delta<1: \phi\big(\mu(O), \sqrt{1-\delta}\big)>0 \right\},}
{which is {strictly} less than $1$
whenever $\mu(O) > 0$, and  if {in addition $O$ has non-empty interior, {then} for any
$\delta > \delta_O$  one can choose $r>0$  small enough to  have $\phi\big(\mu(\sigma_r O), \sqrt{1-\delta}\big)>0$. 
This implies  \eq{maincor}{ \codim E_{\delta}(F^+,O) > 0\text{ whenever }\delta > \delta_O.}} }
{Also 
{one can   observe} that {$$\phi\left(y,\frac{y}{2}\right) = - 
{\left({ 1-\frac{y}{2}}\right)}   \log \frac{1} {{ 1-\frac{y}{2}}} - \frac{y}{2} \cdot \log \frac{{{1+\frac y4}}}{{y}/{2}}< 0$$ for any $0 < y < 1$; hence $ \delta_O > 1 - {{\frac{\mu(O)^2}{4}}} > \mu(O^c)$}.}

{As an example, here is a graph of the function {$\delta\mapsto \phi(0.8, \sqrt{1-\delta})$; thus for $\mu(O) = \frac45$ we get {$\delta_O \approx 0.9888$}.\bigskip }}

\centerline{ \includegraphics[scale=.5]{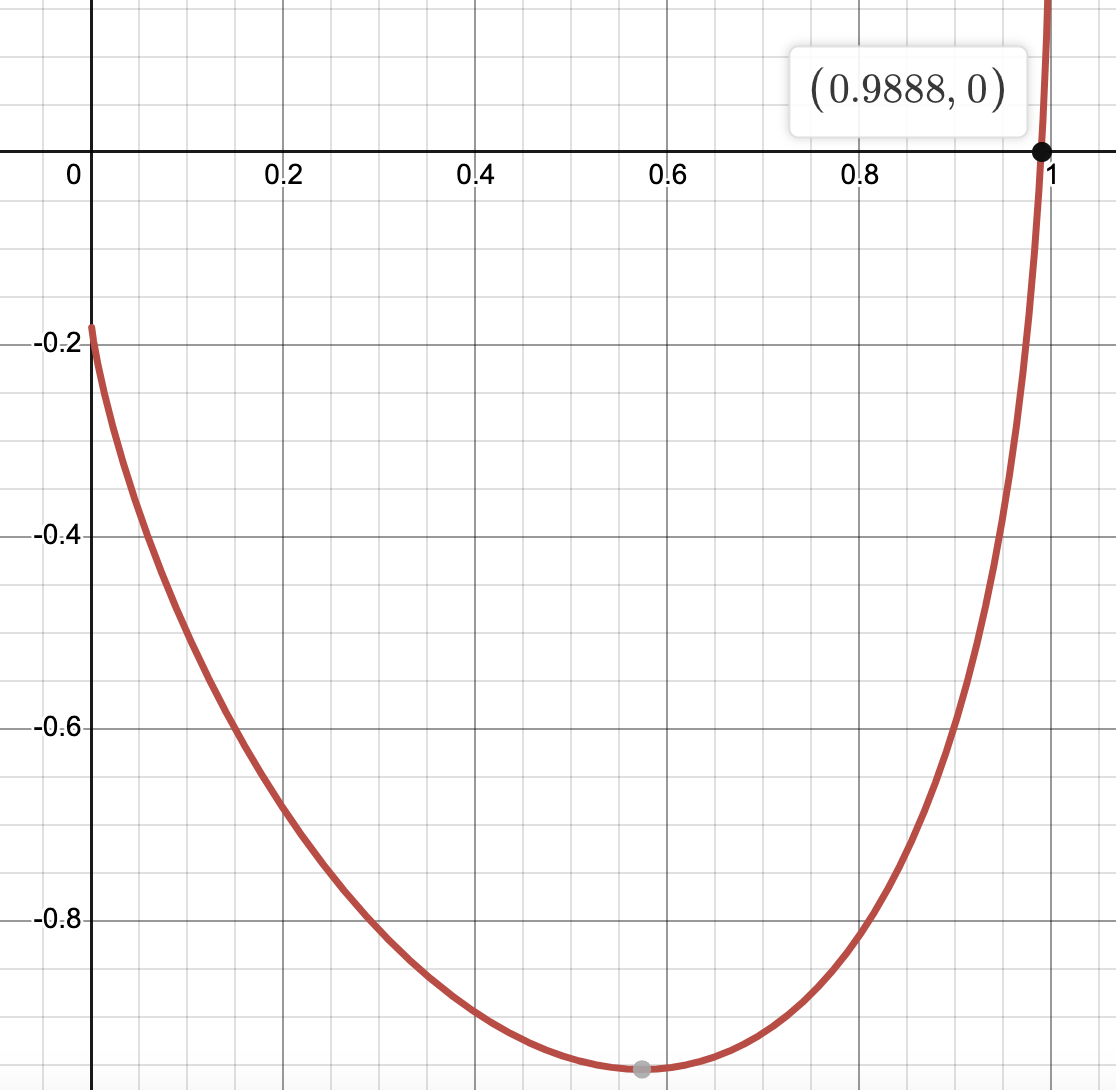}}

{\bigskip In the special case $\delta=1$, using \equ{zerovalue}} we obtain the following immediate corollary:
\begin{cor}\label{cor1}
Let $G$, $\Gamma$, $X$, $F^{+}$ and ${r_1}$ 
be as in Theorem \ref{average}. Then for any {$0<r<r_1$} we have
$$ \codim E_{1}(F^+,O) \gg  {\frac{{\mu(\sigma_r O)}\cdot\log\frac1{1-\frac {\mu(\sigma_r O)}2}} {\log \frac{1}{r}}} \gg \frac{{\mu(\sigma_r O)}^2}{\log \frac{1}{r}} .$$
\end{cor}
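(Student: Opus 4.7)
The corollary is an immediate specialization of Theorem \ref{average}, so the plan is simply to substitute $\delta = 1$ into \equ{mainestimate} and then bound the resulting expression from below by a cleaner quantity.

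First I would apply Theorem \ref{average} with the given $G$, $\Gamma$, $X$, $F^+$, and $r_1$, and with $\delta = 1$. Then $\sqrt{1-\delta} = 0$, so the second argument of $\phi$ is zero. Using the identity \equ{zerovalue}, which asserts that $\phi(y,0) = \log\frac{1}{1-y/2}$ for every $y \in [0,1]$ (this follows from the convention $0\cdot\log\frac10 = 0$ applied to the last two terms in \equ{defphi}), with $y = \mu(\sigma_r O)$ we obtain
$$ \codim E_1(F^+,O) \,\gg\, \frac{\mu(\sigma_r O)\cdot \phi\bigl(\mu(\sigma_r O),0\bigr)}{\log\frac1r} \,=\, \frac{\mu(\sigma_r O)\cdot \log\frac1{1-\frac{\mu(\sigma_r O)}2}}{\log\frac1r}, $$
which is exactly the first inequality in the corollary.

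For the second inequality I would use the elementary estimate $\log\frac1{1-x} \ge x$, valid for all $0 \le x < 1$, applied with $x = \frac{\mu(\sigma_r O)}{2}$. This gives $\log\frac1{1-\mu(\sigma_r O)/2} \ge \mu(\sigma_r O)/2$, so multiplying by $\mu(\sigma_r O)$ produces a lower bound of $\mu(\sigma_r O)^2/2$. Since the factor $1/2$ is absorbed into the implicit constant in $\gg$, the chain of inequalities in the corollary follows.

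There is no real obstacle here: once Theorem \ref{average} is in hand, the corollary is a direct computation using the definition of $\phi$ at $s = 0$ together with a single elementary logarithm bound. The only minor point worth checking is that the convention $0\cdot \log\frac10 = 0$ is legitimately applied to both the $(1-s)\log\frac1{1-s}$ and the $s\log\frac{1+y/4}{s}$ terms when $s = 0$, which is clear from continuity of $\phi$ asserted in the paragraph following \equ{defphi}.
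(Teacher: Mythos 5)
Your proposal is correct and follows exactly the route the paper intends: substitute $\delta=1$ into \equ{mainestimate}, evaluate $\phi(\cdot,0)$ via \equ{zerovalue}, and finish with $\log\frac1{1-x}\ge x$. (A trivial remark: only the term $s\log\frac{1+y/4}{s}$ actually requires the convention $0\cdot\log\frac10=0$ at $s=0$; the term $2(1-s)\log\frac1{1-s}$ vanishes there without it.)
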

One sees that the above corollary does not produce any improvement {of} \equ{mainbound}; on the contrary, our new dimension bound for escape on average happens to be worse by a factor of $\mu(\sigma_r O)$ than the bound for the eventual escape. {In the sequel to the present paper, using some ideas from the work 
\cite{MRC} where a similar problem was studied for the Teichm\"uller geodesic flow, the authors plan to improve the existing estimates as well as extend the methods to treat non-compact \hs s.}

\smallskip
   {We also remark that when $X$ is not compact, one can consider the set 
      $$
      E^{comp}_\delta(F^+) := 
      \bigcap_{C \subset X \,\text{compact}}{E}_{\delta}(F^+,C) $$
      of 
      points in $X$ with trajectories $\delta$-escaping all compact subsets of $X$. Equivalently one can 
    define $E^{comp}_\delta(F^+)$ as the set of $x\in X$ such that  there exists a sequence $T_k \to\infty$ and a weak-$*$
limit $\nu$ of the sequence
of probability measures 
$f\mapsto\frac1
{T_k}\int_0^{T_k} f(g_tx)\,dt $
such that $\nu(X)\le 1 - \delta$. It is proved in  \cite[Theorem 1.3]{RHW} that  whenever $G$ is a connected semisimple Lie group, $\Gamma $   a 
lattice in $G$ and $F^+$ a  one-parameter semigroup contained in one of the simple factors of $G$, 
one has $$\codim\big(E^{comp}_\delta(F^+)\big) \ge c\delta\text{ for all }0 < \delta < 1,$$ where $c$ depends only on $G$ and $\Gamma$. See also 
 \cite[Remark 2.1]{KKLM} for a special case.}



\ignore{As was the case with Theorem \ref{Main Theorem},  to apply Theorem  \ref{average} or Corollary \ref{cor1},  one needs to look for an optimal value of $r$. As an example, let us consider the case $O = B(x,\rho)$, then again one can take $r = \rho/2$ and obtain ... \comm{let's fill in, maybe in the form of a corollary,  instead of a corollary about $S$, which I suggest to omit.}}

\ignore{ Denote  by  $\partial_r O$ the \emph{$r$-neighborhood\/}   of $O$, that is, 
 $$
\partial_r O :=  \{x\in X: {\dist}(x,O) < r\}.
$$
Next, by applying the theorem to $O=\partial_r S$ for closed subset $S$ of $X$ and sufficiently small $r$ we get the following corollary:
\comm{the corollary can be changed accordingly as well}
\begin{cor} \label{balls}
Let $G, \Gamma, X=G/\Gamma,F^{+}, r_0$ and $p_1$ be as in Theorem \ref{average}. \comm{NEED TO CHANGE} Then, for any closed subset $S$ of $X$, any $0<r<r_0$, and any $s>0$ we have:\\
For any $0<\delta<1,$
\eq{rnei}{ \codim E_{\delta}(F^+,\partial_r S) \gg -r^{p_1} s \cdot \log \left( \phi(\mu(\partial_{r/2} S)-s, \delta)\right) .}
Moreover,
\eq{rnei1} {\codim E_{1}(F^+,\partial_r S) \gg -r^{p_1} s \cdot \log \left( {1- \mu(\partial_{r/2} S)+s}   \right) .}
\smallskip
Consequently, the following holds:\\
If $S\subset X$ is a $k$-dimensional {compact} embedded submanifold, then
{there exists $0<r_S<r_0$ such that for any $0<r<r_S$ }one has   
  \smallskip
 \begin{enumerate}
 \item
 For any $0<\delta<1,$
$$ \codim E_{\delta}(F^+,\partial_r S) \gg -r^{p_1+\dim X -k} \cdot \log \left(\phi \left(\frac{1}{4}\left({\frac{r}{2}}\right)^{\dim X -k}, \delta \right)\right) .$$  
\item
$$ \codim E_{1}(F^+,\partial_r S) \gg -r^{p_1+\dim X -k} \cdot \log \left({1-\frac{1}{4}\left({\frac{r}{2}}\right)^{\dim X -k}}  \right)   .$$  \end{enumerate}

\end{cor}
\bigskip}

\smallskip

From now on let $G$, $\Gamma$, $X$ and $F^+$  be as in {Theorems \ref{Main Theorem} and \ref{average}}.
Similarly to the argument in \cite{KMi1, KMi2, KMi3},
{our}
main theorem is deduced from
 {a result} that estimates
$$\dim E_{\delta}(F^+,O) \cap Hx,$$ where $x\in X$ and $H$ is the {\sl unstable horospherical subgroup} with respect to $F^+$, {defined as
\eq{uhs}{H := \{ g \in G:{\dist}({g_t}g{g_{ - t}},e) \to 0\,\,\,as\,\,\,t \to  - \infty \}.}}
{More generally, in the  theorem below we estimate $$\dim
E_\delta(F^+,O) \cap Px$$ for {$x\in X$ and} some proper
subgroups $P$ of $H$, namely those which have so-called  Effective
Equidistribution Property  (EEP) with respect to {the flow $(X,F^+)$. The latter property was {motivated by \cite{KM4} and} introduced in \cite{KMi1}, {where} it was shown to hold for $H$ as above under the assumption of exponential mixing. Namely, we have the following 

\begin{defn}\label{subgroup}
Say that a subgroup $P$ of $G$ has {\sl Effective Equidistribution
Property} {\rm (EEP)} with respect to the flow $(X,F^+)$
if 
 $P$ is normalized by $F^+$, and there exist 
$ 
\lambda > 0$, {$t_0 > 0$} 
and {$\ell \in {\N}$} such that 
for any 
{$x \in X$, {$t \ge t_0$}, 
 $f\in C^\infty(P)$ with $\supp f \subset B^P(1)$ and 
 $\psi\in
C^\infty(X)$}
it holds that
\eq{eep}{\left| {
\int_P
f({{h}})\psi(g_t{{h}}x)\,d\nu({{h}})
- \int_P f\,d\nu {\mkern 1mu} \int_X \psi\,d\mu  {\mkern 1mu} } \right| 
\ll {\max \big(
{\left\| \psi  \right\|_{C^1}, {\left\| \psi  \right\|_{\ell}} }
\big)} \cdot {\left\| f \right\|_{{C^\ell }}} \cdot {e^{ - \lambda t}}{\mkern 1mu}.}
\end{defn}

\ignore{To relate (EEP) with the theorem you applied in the MRC paper, just integrate \equ{eep} from $0$ to $T$, where $T$ is much bigger than $t_0$. For simplicity let me write it down with $f = 1|_V$, where $V\subset P$ is a bounded open set with smooth boundary. Then, after approximating $1|_V$ by smooth functions,  \equ{eep} looks like 
\eq{eepnew}{\left| {
\int_V
\psi(g_t{{h}}x)\,d\nu({{h}})
- \nu(V) {\mkern 1mu} \int_X \psi\,d\mu  {\mkern 1mu} } \right| 
\ll C(\psi,V){e^{ - \lambda t}}{\mkern 1mu},}
where $C(\psi)$ is some constant depending on $\psi$. Now 
average form $0$ to $T$:
\eq{eepnew}{\left| {
\frac1T\int_0^T\int_V
\psi(g_t{{h}}x)\,d\nu({{h}})dt
- \nu(V) {\mkern 1mu} \int_X \psi\,d\mu  {\mkern 1mu} } \right| 
\ll C'(\psi,V) \left(\frac{t_0}T+ \frac{Tt_0}T{e^{ - \lambda t}}\right){\mkern 1mu},}
}}

We remark that in  \cite{KMi1, KMi2, KMi3} this property was defined and used in the more general set-up of $X$ being non-compact, with additional constraints on the {\sl injectivity radius} 
at points {$x\in X$ for which \equ{eep} is satisfied} {(see \S\ref{r2} for more detail)}. However when $X$ is compact the injectivity radius is uniformly bounded from below, {hence a possibility to simplify the definition.}

\ignore{Here is an important non-horosherrical example in the non-compact setting: when $X =\SL_{m+n}(\R)/\SL_{m+n}(\Z)$ 
and
$$g_t = {g_t^{\vr,\vs} :=} \diag({e^{{i_1}t}},\dots,{e^{{i_m}t}},{e^{
- {j_1}t}},\dots,{e^{ - {j_n}t}}),$$
where
$$\vr = ({i_k}: k = 1,\dots, m)\text{ and }\vs = ({j_\ell}:  \ell = 1,\dots,  n)$$
{and 
$${{i_k},{j_\ell} > 0\,\,\,\,and\,\,\,\,\sum\limits_{k = 1}^m {{i_k} = 1 = \sum\limits_{\ell = 1}^n {{j_\ell}} },}$$}
it is shown in \cite{KMi1}
that the subgroup
$$
{P = \left\{ \left( {\begin{array}{*{20}{c}}
{{I_m}}&A\\
0&{{I_n}}
\end{array}} \right) : A\in  {M_{m,n}}(\mathbb{R})\right\}
}$$
satisfies (EEP) relative to the
$g_t^{\vr,\vs}$-action. This example is important for Diophantine applications, see \cite[Theorem 1.4]{KMi1}. \comm{Maybe move this remark away from the introduction?}}

\ignore{Let $\frak p$ be the Lie algebra of $P$.
Define
\eq{b1}{{{\lambda_{\min}}} := \min \{ |\lambda |:\,\lambda \text{ is an eigenvalue of }\ad_{{g_1}}|_{\frak p}\} }
and
\eq{lmax}{\lambda_{\max} := \max \{ |\lambda |:\,\lambda \text{ is an eigenvalue of }\ad_{{g_1}}|_{\frak p}\} }}


{{\begin{thm}\label{dimension drop 3} Let $G$, $\Gamma$, $X$, $F^{+}$   
be as above.
Then there exists ${{r_2} 
>0}$ such that whenever {a connected subgroup  $P$  of $H$} 
has property {\rm (EEP)} with respect to {the flow $(X,F^+)$},
for any non-empty open subset $O$ of $X$,  any $0<r\le 
{r_2}$,   any $0<\delta < 1$ and any $x\in X$ one has
{$$ \codim \big(E_{\delta}(F^+,O) \cap Px\big) 
\gg \frac{{\mu(\sigma_r O) \cdot \phi\big(\mu(\sigma_r O), \sqrt{1-\delta}\big)}}{ \log \frac{1}{r}}.$$} 
 \end{thm}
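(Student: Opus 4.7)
The plan is to refine the covering construction of \cite{KMi1} by adding a two-scale pigeonhole step (via Markov's inequality) that exploits the ``on average'' aspect of the escape hypothesis. I work on a single $P$-ball $B=B^P(1)x$; the full statement follows by covering $Px$ by countably many such balls and using countable subadditivity of \hd.

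The first ingredient is a tree-like tessellation of $B$. Choose a time step $\tau\asymp\log(1/r)$, large enough that the EEP error $e^{-\lambda\tau}$ is a positive power of $r$. Recursively partition $B$ into sub-balls of $P$-radius $\rho_n:=\rho_0 e^{-n\lambda\tau}$; each level-$n$ cell $c\in\mathcal{C}_n$ has $\asymp e^{\lambda\tau\dim P}$ children. Attach to $c$ the binary labels $w_k(c)$ for $k=0,\ldots,n-1$, defined by $w_k(c)=1$ iff $g_{k\tau}$ sends the center of $c$ into $\sigma_r O$. After standard smoothing of $\mathbf{1}_{\sigma_r O}$ by a $C^\ell$ bump, (EEP) applied to the indicator of $c$ shows that the fraction of children $c'$ of $c$ with $w_n(c')=1$ differs from $y:=\mu(\sigma_r O)$ by $O(e^{-\lambda\tau})$, uniformly in $c$ and $n$.

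Next I translate the escape-on-average condition into a combinatorial condition on label patterns. Since $g_t$ moves by $O(r)$ over time $O(r)$, the $\limsup$ condition forces, for infinitely many $N$, the cell $c_N\in\mathcal{C}_N$ containing $x$ to have at most $(1-\delta+o(1))N$ labels equal to $1$. Group the $N$ time indices into $M=N/K$ blocks of size $K$, and call a block \emph{sparse} if at most the fraction $s:=\sqrt{1-\delta}$ of its labels are $1$. Markov's inequality applied to the block-averaged label densities then forces at least $(1-s)M$ of the blocks to be sparse. The counting step bounds the number of level-$N$ cells with such a sparse-heavy label pattern by combining the per-step EEP bound through the tree: a factor $(y+O(r^\kappa))$ per label-$1$ slot in a sparse block, $(1-y+O(r^\kappa))$ per label-$0$ slot in a sparse block, and the trivial bound $2^K$ per non-sparse block. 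Combining this with the binomial coefficients $\binom{M}{(1-s)M}$ (choice of which blocks are sparse) and $\binom{K}{\lfloor sK\rfloor}$ (positions of label-$1$ slots inside each sparse block), applying Stirling, and dividing by $\log(\rho_N^{-1})\asymp N\log(1/r)$, one extracts after simplification the claimed codimension bound
\[
\codim\bigl(E_\delta(F^+,O)\cap B\bigr)\gg\frac{\mu(\sigma_r O)\,\phi(\mu(\sigma_r O),\sqrt{1-\delta})}{\log(1/r)}.
\]

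The main obstacle is the entropy bookkeeping in the last step: matching the combined binomial/EEP count to the specific expression $\phi(y,s)$ from \equ{defphi} requires carefully aggregating three competing contributions. The dominant gain $(1-s)\log\tfrac1{1-y/2}$ comes from the many sparse blocks, with the $y/2$ reflecting a factor-of-two safety margin absorbing the EEP error; the penalty $-2(1-s)\log\tfrac1{1-s}$ records the inter-block binomial $\binom{M}{(1-s)M}$ together with the intra-sparse-block binomials; and $-s\log\tfrac{1+y/4}{s}$ reflects the trivial bound on the non-sparse fraction of blocks. Optimizing the block size $K$ and ensuring that all error terms are controlled uniformly in $N$ are where most of the technical work lies.
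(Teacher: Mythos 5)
Your overall strategy---a multiscale covering refined time step by time step, EEP to control how many cells of each generation meet $O$, a pigeonhole argument forcing at least a $(1-s)$-fraction of time blocks to be ``sparse'', a binomial count of which blocks those are, Stirling, and the optimization leading to $s=\sqrt{1-\delta}$---is the same as the paper's. But there is a genuine gap in your discretization step. You define the labels by sampling the orbit at the discrete times $k\tau$ (``$w_k(c)=1$ iff $g_{k\tau}$ sends the center of $c$ into $\sigma_r O$'') and then assert that $\limsup_{T}\frac1T\int_0^T 1_{O^c}(g_thx)\,dt\ge\delta$ forces the cell containing $h$ to carry at most $(1-\delta+o(1))N$ labels equal to $1$. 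This implication is false: with $\tau\asymp\log(1/r)$ the sampling times are very sparse, and an orbit can pass through $\sigma_r O$ at every time $k\tau$ while spending an arbitrarily large proportion of the intervening time in $O^c$; the observation that ``$g_t$ moves by $O(r)$ over time $O(r)$'' only controls the integrand on a negligible neighborhood of the sample times. Consequently $E_\delta(F^+,O)\cap B$ need not be contained in the union of cells with sparse label patterns, and the covering never gets started. The paper avoids this by taking the continuous-time block averages $\frac1T\int_{(i-1)T}^{iT}1_{O^c}(g_thx)\,dt$ as the basic combinatorial data (Proposition \ref{lsup} and Lemma \ref{main lemma} are the correct discretization and Markov steps for these), and by integrating the EEP estimate over an entire block (Proposition \ref{exponential mixing1}) rather than applying it at isolated times.

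A second issue is the final bookkeeping, which you flag as the main obstacle but whose resolution you assume. If one carries out your per-label count, a sparse block contributes roughly $\binom{K}{\lfloor sK\rfloor}\,y^{sK}(1-y)^{(1-s)K}$, i.e.\ a large-deviations rate $(1-s)\bigl(s\log\frac sy+(1-s)\log\frac{1-s}{1-y}\bigr)$ per unit of $N$; this does not reduce to the terms of $\phi$, and your identification of $-2(1-s)\log\frac1{1-s}$ with ``inter-block plus intra-block binomials'' does not check out numerically. In the paper the factor $2$ arises from the inter-block binomial $B(1-s)^N$ together with the $1/\vre$ produced by Markov's inequality inside the measure estimate (Corollary \ref{sub count}), while $1-\frac y2$ comes from the integrated EEP bound $C(T,O)\le 1-\frac{\mu(\sigma_{5r}O)}{2}$ after the choice of $T$ in \equ{Tfinal}---there is no intra-block count at all. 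So even after repairing the discretization you would have to redo the optimization; your scheme might well yield a different (conceivably sharper) rate function, but it has not been shown to yield $\phi(y,s)$, nor to be positive for the stated range of $\delta$. Finally, covering by round balls of radius $\rho_0e^{-n\lambda\tau}$ ignores the anisotropy of $\Ad g_t|_{\mathfrak{p}}$ when $\lambda_{\min}<\lambda_{\max}$; this is why the paper works throughout with Bowen boxes and only converts to balls of radius $re^{-\lambda_{\max}NT}$ at the very end (Lemma \ref{coveringballs}).
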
}} }
{In the next section we    derive Theorem \ref{average} from Theorem \ref{dimension drop 3}, and 
the rest of the {paper is} be dedicated to proving Theorem \ref{dimension drop 3}. 
{Section \ref{aux} contains a discussion of basic technical constructions needed for the proof, such as \hd\ estimates for lim sup sets, tessellations of nilpotent Lie groups and Bowen boxes.
In Section \ref{covcpt} we use property (EEP) to, given a subset $S$ of $X$, a large $T> 0$ and $0 < \delta < 1$,  write down a measure estimate for the set of   $h\in P$
such that the Birkhoff average $\frac{1}{T} \int_0^{T} 1_{S}(g_thx)\,dt \ge \delta$. In the subsequent section this estimate is used to bound the number of Bowen boxes that can cover certain exceptional sets. Finally, Section  \ref{proof} contains the conclusion of the proof.}

\ignore{\section{Exponential mixing and EEP}
 In what follows, $\|\cdot\|_{{p}}$ will stand for the $L^p$ norm, and $(\cdot,\cdot)$ for the inner product in $L^2(X,\mu)$. {Fix a basis $\{Y_1,\dots,Y_n\}$ 
for the Lie algebra
$\frak g$
of
$G$, and, given a smooth
 function $h \in C^\infty(X)$ and $\ell\in{\Z_+}$, define the ``{\sl $L^{{p}}$, order $\ell$" Sobolev norm} $\|h \|_{\ell{,p}}$ of $h $ by 
 $$
 \|h \|_{\ell{,p}} \df \sum_{|\alpha| \le \ell}\|D^\alpha h \|_{{p}},
 $$
 where 
 $\alpha = (\alpha_1,\dots,\alpha_n)$ is a multiindex, $|\alpha| = \sum_{i=1}^n\alpha_i$, and $D^\alpha$ is a differential operator of order $|\alpha|$ which is a monomial in  $Y_1,\dots, Y_n$, namely $D^\alpha = Y_1^{\alpha_1}\cdots Y_n^{\alpha_n}$.
This definition depends on the basis,
however, a change of basis
would only  distort
$ \|h \|_{\ell{,p}}$
by a bounded factor. We also let 
$$C^\infty_2(X) = \{h \in C^\infty(X): \|h \|_{\ell{,2}} < \infty\text{ for any }\ell = \Z_+\}.$$
Clearly smooth compactly supported functions belong to $C^\infty_2(X)$. We will also use the operators $D^\alpha$ to define $C^\ell$ norms of smooth 
functions $f$ on $X$: 
$$
\|f\|_{C^\ell}  := \sup_{x\in X, \ |\alpha|\le \ell}|D^\alpha f(x)|.
$$}
\begin{defn}

\label{exp decay} {Let $F^+ = \{g_t : t\ge 0\}$ be a one-parameter subsemigroup of $G$, and let $X = \ggm$ where $\Gamma$ is a lattice in $G$.} We say that a flow $(X,F^+)$ is {\sl exponentially
mixing}  if there exist {$\gamma  > 0$ and $\ell\in\Z_+$}
such that for any   $\varphi, \psi \in  C^\infty_2(X) $
 and for any $t\ge 0$ one has
{\eq{em}{\left| {({g_t}\varphi ,\psi )- \int_X\varphi\, d\mu \int_X\psi\, d\mu}\right|   \ll  {e^{ - \gamma t}}{\left\| \varphi  \right\|_{\ell{,2}}} {\left\| \psi  \right\|_{\ell{,2}}}.}}
\end{defn}



{As is the case in many applications, we will use the exponential mixing to study expanding translates of pieces of certain subgroups of $G$. Let $P\subset G$ be a subgroup with a fixed Haar measure $\nu$;
{
{denote by $B^P(r)$ the 
open ball of radius $r$ centered at the identity element with respect to the  
metric  on $P$ corresponding to the Riemannian structure induced from $G$.}
\begin{defn}\label{subgroup}
Say that a subgroup $P$ of $G$ has {\sl Effective Equidistribution
Property} {\rm (EEP)} with respect to the flow $(X,F^+)$
if 
 $P$ is normalized by $F^+$, and there exist 
$ a,b,  \lambda > 0$, $E \ge 1$ and {$\ell \in {\N}$} such that 
for any 
{$x \in X$ and $t > 0$ with \eq{conditionont}{t\  {\ge a+ b \log\frac{1}{r_0({x}) },} }
 any $f\in \cic(P)$ with $\supp f \subset B^P(1)$ and 
any $\psi\in
C^\infty_2(X)$}
it holds that
\eq{eep}{\left| {
\int_P
f({{h}})\psi(g_t{{h}}x)\,d\nu({{h}})
- \int_P f\,d\nu {\mkern 1mu} \int_X \psi\,d\mu  {\mkern 1mu} } \right| {\le}\ E \cdot {\max (
{\left\| \psi  \right\|_{C^1}, \left\| \psi  \right\|_{\ell{,2}} }
)} \cdot {\left\| f \right\|_{{C^\ell }}} \cdot {e^{ - \lambda t}}{\mkern 1mu}.}
\end{defn}
\comm{Maybe a theorem?} If $(X, F^+)$ is exponentially mixing, then it is shown in \cite [Theorem 2.6]{KMi1} that $H$ as in \equ{uhs} has (EEP) with respect to $(X,F^+)$. 

\comm{Remark: when $X$ is compact, $r_0$ can be uniform.}

Also, in the case $X =\SL_{m+n}(\R)/\SL_{m+n}(\Z)$ 
and
$$g_t = {g_t^{\vr,\vs} :=} \diag({e^{{i_1}t}},\dots,{e^{{i_m}t}},{e^{
- {j_1}t}},\dots,{e^{ - {j_n}t}}),$$
where
$$\vr = ({i_k}: k = 1,\dots, m)\text{ and }\vs = ({j_\ell}:  \ell = 1,\dots,  n)$$
{and 
$${{i_k},{j_\ell} > 0\,\,\,\,and\,\,\,\,\sum\limits_{k = 1}^m {{i_k} = 1 = \sum\limits_{\ell = 1}^n {{j_\ell}} },}$$}
it is shown in \cite{KMi1}
that the subgroup
$$
{P = \left\{ \left( {\begin{array}{*{20}{c}}
{{I_m}}&A\\
0&{{I_n}}
\end{array}} \right) : A\in  {M_{m,n}}(\mathbb{R})\right\}
}$$
satisfies (EEP) relative to the
$g_t^{\vr,\vs}$-action.}}}

\section{Theorem \ref{dimension drop 3} $\Rightarrow$ Theorem \ref{average}
}

The reduction of Theorem \ref{average} to Theorem \ref{dimension drop 3} is fairly standard. Let $\mathfrak{g}$ be a Lie algebra of $G$,
$\mathfrak{g}_\mathbb{C}$ its complexification, and for $\lambda
\in \mathbb{C}$, let $E_ \lambda$ be 
the eigenspace of
$\Ad\, g_1$ corresponding to $\lambda$.
Let $\mathfrak{h}$, $\mathfrak{{h^0}}$, $\mathfrak{{h^ - }}$ be the
subalgebras of $\mathfrak{g}$ with complexifications:
\[{\mathfrak{h}_\mathbb{C}} = \spn({E_\lambda }: \left| \lambda  \right| > 1),\ \mathfrak{h}_\mathbb{C}^0 = \spn({E_\lambda }: \left| \lambda  \right| = 1),\ \mathfrak{h}_\mathbb{C}^ -  = \spn({E_\lambda }:\left| \lambda  \right| < 1).\]
Let $H$, ${H^0}$, ${H^ - }$ be the corresponding subgroups of $G$. 
{Note that} $H$ is {precisely} the
unstable horospherical subgroup {with respect to $F^+$} (defined in \equ{uhs}) and $H^-$ is {the}
stable horospherical subgroup defined by:
\[{H^ - } = \{ h \in G:{g_t}h{g_{ - t}} \to e\,\,\,as\,\,t \to  + \infty \}. \]
Since $\Ad\, g_1$ is assumed to be diagonalizable over $\mathbb{C}$, $ \mathfrak{g} $ is the direct sum of
$\mathfrak{h}$, $\mathfrak{{h^0}}$ and $\mathfrak{{h^ - }}$. Therefore $G$ is
{locally (at a neighborhood of identity) a} direct product of the subgroups $H$, ${H^0}$ {and} ${H^ - }$.

{Denote the group ${H^ - }{H^0}$ by $\tilde H$, and fix ${0 < \rho < 1}$ with the following properties:}
{
\begin{equation}\label{rho1}
 \text{the multiplication map }\tilde H \times H \to G\text{ 
is one to one  on }B^{{\tilde H}}(\rho) \times {B^H}(\rho), \end{equation}
and
\begin{equation}\label{conjugate implied}
{g_tB^{\tilde H}(r)g_{-t} \subset B^{\tilde H}(2r) \text{ for any $0<r \le \rho$ and }t\ge 0}\end{equation}}
{(the latter can be done since  $F$ is $\Ad$-diagonalizable and  the restriction of the map $g \to g_tgg_{-t}$,
$t > 0$, to the subgroup $\tilde P$
is non-expanding).} 

\begin{proof}[Proof of Theorem \ref{average} assuming Theorem \ref{dimension drop 3}]
Let
{
$\rho$ be as in \eqref{rho1}, \eqref{conjugate implied} and define 
$$r_1:= \min(\rho, {r_2})
.$$
For any {$0< r< r_1$} choose $s$ such that 
\eq{cont1}{{B^G
}(s) \subset B^{{\tilde H}}(r/4){B^H}(r/4).}
} 
Now take   $O \subset X$, and for  $x\in X$ denote 
\eq{reduced set}{{{E_{\delta,x,s}}:=}\, \left\{ g \in {B^G
}(s):gx \in E_\delta(F^+,O)\right\}.}
%
In view of the countable stability of Hausdorff dimension, in order to prove the theorem it suffices to show that for any  $x \in X$,
\eq{need}{
\codim E_{\delta,x,s}  \gg  \frac{{\mu(\sigma_r O) \cdot \phi\big(\mu(\sigma_r O), \sqrt{1-\delta}\big)}}{\log \frac{1}{r}}  }
Indeed, $E_\delta(F^+,O)$ can be covered by countably many sets $\{gx : g \in E_{\delta,x,s}\}$, with the quotient maps 
{$\pi_x: B^G
(s) \to X$} 
being Lipschitz and one-to-one.  
Since every ${g\in B(s)}
$ can be written as $g = {\tilde h}h$, where
${\tilde h} \in {B^{\tilde H}}(r/4)$ and $h \in {B^{H}}(r/4)$, 
for any $y \in X$ we can write
\eq{transition1}{
\begin{aligned}
{\dist}({g_t}gx,y) &\le {\dist}({g_t}{\tilde h}hx,{g_t}hx) + {\dist}({g_t}hx,y)\\ &={\dist}\big(g_t{\tilde h}g_{-t}{g_t}hx,{g_t}hx\big)+ {\dist}({g_t}hx,y).\end{aligned}}
Hence in view of \eqref{conjugate implied} and \equ{cont1},
$g \in {E_{\delta,x,s}}$ implies that {$h{x}$ belongs to $E_\delta({F^ + },{\sigma
_{{r/2}}}O)$}, and by using
Wegmann's Product Theorem \cite{Weg} we conclude that: 
\eq{wegmann}
{\begin{split}
\dim {E_{\delta,x,s}} 
&\le {\dim} \left(\{ h \in {B^H}(r/4):hx \in E_\delta({F^ + },{\sigma
_{r/2}}O)\} \times {B^{\tilde H}(r/4)}
\right)\\
& \le {\dim} \big(\{ h \in {B^H}(r/4):hx \in E_\delta({F^ + },{\sigma
_{r/2}}O)\}\big) +  \dim  {\tilde H }.
\end{split}
}
Since $(X, F^+)$ is exponentially mixing, by \cite [Theorem 2.6]{KMi1} $H$ has (EEP) with respect to $(X,F^+)$. 
Therefore, {by Theorem \ref{dimension drop 3} applied to $O$ replaced by $\sigma_{r/2}O$} and $r$ replaced with $r/4$ we have for any $0<r< r_1$
{
\eq{estimateH}{\begin{split} 
& \codim \{ h \in {B^H}(r/4):hx \in E_\delta({F^ + },{\sigma
_{r/2}}O)\}  \\
& \gg  \frac{{\mu(\sigma_{r/4} \sigma_{r/2} O) \cdot \phi\big(\mu(\sigma_{r/4} \sigma_{r/2} O),\sqrt{1-\delta}\big)}}{\log \frac{4}{r}} \\
& \gg  \frac{{\mu(\sigma_r O) \cdot \phi\big(\mu(\sigma_r O), \sqrt{1-\delta}\big)}}{\log \frac{4}{r}} 
 \gg \frac{{\mu(\sigma_r O) \cdot \phi\big(\mu(\sigma_r O), \sqrt{1-\delta}\big)}}{\log \frac{1}{r}}.
\end{split}}
Now from \equ{wegmann} and \equ{estimateH} we conclude that \equ{need} is satisfied, which finishes the proof. }
\end{proof}

\ignore{\begin{proof}[Proof of Corollary \ref{balls}] {If $S = \varnothing$ there is nothing to prove. Otherwise, by   Theorem \ref{average} 
{applied to $U = \partial_rS$ and  with $r/2$ in place
of $r$}, any $0<r<r_0$, any $s>0$, and for any $0<\delta<1$ the inequality \equ{rnei} holds. Similarly \equ{rnei1} follows from Corollary \ref{cor1} applied to $U=\partial_r S$.
 This proves the main part of the corollary.}
 \smallskip
 \\
{For the ``consequently" part, if $S$ is a $k$-dimensional compact embedded submanifold in $X$, then it is
easy to see that 
one can find {$0<r_S<r_0$}
dependent on $S$ {such that  for all $r< 
r_S
$ one has}  \eq{partial 3}{\mu ({\partial _{r/2}}S)\, {\ge} \, \frac{1}{2}{\left(\frac{r}{2}\right)^{\dim X
- k}}.}
So by setting $s=\frac{1}{4}{\left(\frac{r}{2}\right)^{\dim X
- k}}$  and in view of \equ{partial 3}, consequently part follows form the main part of the corollary. }
\end{proof}}

\section{Auxillary facts}\label{aux}
\subsection{\hd\ of limsup sets}
    The exceptional sets we study in this paper are of the form $A=\limsup\limits_{N\r\infty} A_N$, that is 
    \[A=\bigcap\limits_{N\geq 1}\bigcup\limits_{n\geq N}A_n\]
    for a sequence of subsets $A_N$.
    
    First, we recall the definition of the Hausdorff dimension.    
    Let $A$ be a subset of a metric space $Y$. For any  $\rho ,\beta >0$, we define
        \[ {\mathcal H}_\rho^\b (A) = \inf\set{ \sum_{I\in \mc{U}} \diam(I)^\beta: \mc{U} \text{ is a cover of A by balls of diameter } <\rho  }. \]
        
        Then, the $\beta$-dimensional Hausdorff measure of $A$ is defined to be
        \[ {\mathcal H}^\b (A) = \lim_{\rho \r 0} {\mathcal H}_\rho^\b(A). \]
 
 \begin{defn}\label{defhd}
        The Hausdorff dimension of a subset $A$ of a metric space $Y$ is equal to
        \[ \dim
        (A) = \inf\set{\b\geq 0: {\mathcal H}^\b(A) = 0}= \sup \set{\b\geq 0: {\mathcal H}^\b(A) = \infty}. \]
 \end{defn}
\begin{lem} \label{Auxillary}
        Let $\set{A_N}_{N\geq 1}$ be a collection of subsets of $Y$.
        Suppose there exist constants {$C  >0$, $0 < \rho < 1$, $N_0\in\N$}, and a sequence  $\{\alpha_N\}_{N\ge 1}$ such that for each $N\ge N_0$, the set $A_N$ can be covered with at most {$\rho^{-\alpha_N N}$
        balls of radius $C 
        \rho^N$}. {Then $$\dim\left(\limsup\limits_{N \r\infty} A_N\right) \le \liminf\limits_{N \r\infty} \alpha_N.$$}
      \end{lem}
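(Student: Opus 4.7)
The proof plan is the standard covering argument for the Hausdorff dimension of a $\limsup$ set. Set $\alpha := \liminf_{N\to\infty}\alpha_N$ and let $\beta>\alpha$ be arbitrary. By Definition \ref{defhd} it suffices to show that $\mathcal{H}^\beta\big(\limsup_{N\to\infty} A_N\big)=0$, since then $\dim\big(\limsup_{N\to\infty}A_N\big)\le\beta$, and we may send $\beta \searrow \alpha$.

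To control $\mathcal{H}^\beta$ from above, I would exploit the tautological inclusion $\limsup_{N\to\infty} A_N \subset \bigcup_{N\ge M} A_N$, valid for every $M\ge N_0$. Feeding in the hypothesized cover of each $A_N$ in this union by at most $\rho^{-\alpha_N N}$ balls of radius $C\rho^N$ (and hence diameter $\le 2C\rho^N$) produces, at resolution $2C\rho^M$, the estimate
\[
\mathcal{H}^\beta_{2C\rho^M}\Big(\limsup_{N\to\infty} A_N\Big) \;\le\; \sum_{N\ge M}\rho^{-\alpha_N N}\,(2C\rho^N)^\beta \;=\; (2C)^\beta \sum_{N\ge M} \rho^{(\beta-\alpha_N)N}.
\]

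The remaining step is to send $M\to\infty$. The left side then converges to $\mathcal{H}^\beta\big(\limsup_{N\to\infty} A_N\big)$ by definition of Hausdorff measure, and the right side is the tail of a series whose general term is $\rho^{(\beta-\alpha_N)N}$. The main point to verify is that this tail does go to zero. By the choice $\beta>\alpha=\liminf_N\alpha_N$, one can fix $\eta:=(\beta-\alpha)/2>0$ and observe that $\beta-\alpha_N\ge\eta$ holds on a cofinal set of indices; in the intended applications $\alpha_N$ actually stabilizes to a limit so that this lower bound holds for all large $N$, making the series compare to the geometric $\sum_{N\ge M}\rho^{\eta N}$, which visibly tends to $0$ as $M\to\infty$. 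No serious obstacle arises; the entire argument is a bookkeeping exercise with the definition of Hausdorff measure applied to the natural cover afforded by the hypothesis.
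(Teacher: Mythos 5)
Your argument is essentially the paper's: fix $\beta>\alpha:=\liminf_N\alpha_N$, cover $\limsup_N A_N$ by $\bigcup_{N\ge M}A_N$ using the hypothesized covers, and bound $(2C)^\beta\sum_{N\ge M}\rho^{(\beta-\alpha_N)N}$ by a geometric tail. The subtlety you flag is real and is exactly what the paper also glosses over: it declares ``without loss of generality $\alpha_N\to\alpha$'' and then uses $|\alpha_N-\alpha|<\tfrac{\beta-\alpha}{2}$ for \emph{all} $N\ge N_\xi$; a merely cofinal bound would not suffice, since one cannot discard the indices with large $\alpha_N$ from $\bigcup_{N\ge M}A_N$ without losing containment of the $\limsup$, and the statement with a bare $\liminf$ genuinely fails for oscillating $\alpha_N$ (e.g.\ $A_N=\varnothing$ for $N$ even and $A_N=Y=[0,1]$ for $N$ odd gives $\liminf\alpha_N=0$ yet $\dim(\limsup A_N)=1$). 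So your proof is complete under the same tacit convergence hypothesis that the paper's proof invokes, and that hypothesis does hold where the lemma is applied.
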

      
      \begin{proof}
      	Let {$A:=\limsup\limits_{N \to\infty} A_N$ and $\a:= \liminf\limits_{N \to\infty} \alpha_N$; without loss of generality we can assume that $\alpha < \infty$ and $\alpha_N\to\alpha$ as $N\to\infty$.
	Take} $\b 
	> \a$; 
         we will show that ${\mathcal H}^\b(A) = 0$, which will imply the lemma.
         {For any ${{\xi}} \in (0,1)$, let ${N_\xi\ge {N_0}}$ be a natural number such that $\rho^{-N } < {{\xi}} $ {and $|\alpha_N- \alpha|< \frac{\beta - \alpha}{2}$} for all $N \geq {N_\xi}$. Notice that ${N_\xi}$ tends to infinity as ${{\xi}}$ goes to $0$.
         Take $N\ge N_\xi$ and denote by $\mc{O}_N$ a cover of the set $A_N$ by balls of radius ${C} 
        \rho^N$ such that $\#\mc{O}_N$, the number of balls in the cover $\mc{O}_N$, is  at most  ${\rho^{-{\alpha_N} N}}$. Then  $\mc{O}=\bigcup_{N \geq {N_\xi}} \mc{O}_N$ is a cover of $A$ for which the following holds:
         $$
   { \begin{aligned}
         	\sum_{B \in \mc{O}} \diam(B)^\beta 
         	& = \sum_{N \geq {N_\xi}} \sum_{B \in \mc{O}_N} \diam(B)^\b 
            	\le (2C)^\b \sum_{N \geq {N_\xi}} \# \mc{O}_N \cdot \rho^{\b  N} \\
         &      \leq (2C)^\b  \sum_{N \geq {N_\xi}}  \rho^{\left(\b -\a_N \right) N}
             \leq (2C)^\b  \sum_{N \geq {N_\xi}}  \rho^{\frac{\b -\a}{2} N} \\
          &\leq  (2C)^\b  \frac{\rho^{\frac{\b -\a}{2}{N_\xi} }}{1-\rho^{\frac{\b -\a}{2}}}
        \xrightarrow{{{\xi}}\r 0} 0.          \end{aligned}}
         $$}
        
        
        This implies that ${\mathcal H}^\b(A) = 0$, {and the conclusion of the lemma follows}.
      \end{proof}
\smallskip
\ignore{
\section{Reduction to strong unstable subgroups}
\begin{prop} Given a connected simple Lie group $G_0,$ there are constants
$\theta_1 = \theta_1(G_0) > 0$ and $M = \theta_1(G_0) > 0$ satisfying the following condition:\\
Suppose $A_0$ is a Cartan subgroup and its Lie algebra $\mathfrak{a}_0$ is equipped with the
norm induced by the Killing form on $\frak{g}_0.$ For all $1 \le i \le s,$ unit vector $D \in \frak{a}_0$ and
$ \theta \in (0,\theta_1)$, the unstable Lie subalgebra $\frak g_0^+$
contains a Lie subalgebra $\frak u$, and there
exists $D' \in \frak a_0$ such that:
\begin{enumerate}
    \item 
    $|D'|=1$ and $|D'-D|< M \theta$;
    \item
   $ \mathfrak u = \bigoplus_{{\chi} \in \sum_0} \mathfrak g_0^\chi$;
   \item
   $\chi (D) \ge \theta \,\, \text{and} \,\, \chi (D') \ge \theta \,\, \text{for each}\,\,  \mathfrak g_0^\chi \subseteq \frak u$
    \end{enumerate}

\end{prop}
The subgroup $U = \exp u$ is the strong horosphere for the one parameter subgroup
$\exp(tD').$ For all $r > 0,$ we define a bounded neighborhood $B_r$ of the identity in $U$ by
$$B_r=\exp B_r^{\frak u}   \subset B .         $$
and denote
$$B=B_1=\exp B_1^{\frak u}        $$
The subalgebra $\frak g_0^+$
splits as $\frak u \oplus u^{\perp}$ where
$$\frak u^ {\perp} =\underset{\chi(D')=0,\, \chi (D)>0}{\bigoplus} \frak g_0^{\chi}        $$}

\ignore{\subsection{Sobolev spaces and approximation by smooth functions}
In this subsection we let $Y$ be a quotient of a Lie group $G$ by a discrete subgroup (possibly the trivial one), and let $\mu$ be a $G$-invariant measure on $Y$. In what follows, $\|\cdot\|_{{p}}$ will stand for the $L^p$ norm on $(Y,\mu$).
{Fix a basis $\{v_1,\dots,v_n\}$ 
for the Lie algebra
$\frak g$
of
$G$, and, given a smooth
 function $h \in C^\infty(Y)$ and $\ell\in{\Z_+}$, let us define the ``{\sl $L^{{p}}$, order $\ell$" Sobolev norm} $\|h \|_{\ell{,p}}$ of $h $ by 
 $$
 \|h \|_{\ell{,p}} \df \sup_{|\alpha| \le \ell}\|D^\alpha h \|_{{p}},
 $$
 where 
 $\alpha = (\alpha_1,\dots,\alpha_n)$ is a multiindex, $|\alpha| = \sum_{i=1}^n\alpha_i$, and $D^\alpha$ is a differential operator of order $|\alpha|$ which is a monomial in  $v_1,\dots, v_n$, namely $D^\alpha = v_1^{\alpha_1}\cdots v_n^{\alpha_n}$.
This definition depends on the basis,
however, a change of basis
would only  distort
$ \|h \|_{\ell{,p}}$
by a bounded factor. 
The Sobolev space $W_{\ell{,p}}(Y)$ is defined to be the completion of  $$C^\infty_p(Y) := \{h \in C^\infty(Y): \|h \|_{\ell{,p}} < \infty\text{ for any }\ell \in \Z_+\}$$ with respect to $ \|\cdot \|_{\ell{,p}}$.
We will also use the operators $D^\alpha$ to define $C^\ell$ norms of smooth 
functions $f$ on $Y$: 
$$
\|f\|_{C^\ell}  := \sup_{y\in Y, \ |\alpha|\le \ell}|D^\alpha f(y)|.
$$}
\ignore{
In order to approximate subsets of $Y$ with smooth functions we will use the following family of functions on $G$ supported on small neighborhoods of identity. 
The next lemma  is an immediate corollary of \cite[Lemma 2.6]{K}, see also \cite[Lemma 2.4.7(b)]{KM1}:
\begin{lem}\label{KMlem} Let $G$ be a Lie group.
For each {$\ell\in \Z_+$} there exists   $M_{G,\ell}\ge 1$ 
with the following property: for any
$0 < {{\vre}} < 1$ there exists a nonnegative smooth function  $\varphi_{{\vre}}$ on $G$ such that
{
\begin{enumerate}
\item $\supp(\varphi_{{\vre}})\subset B^G({\vre})$; 
\item $\|\varphi_{{\vre}}\|_1
= 1$; 
\item
$\| \varphi_{{\vre}} \|_{\ell{,1}} \le M_{G,\ell} \cdot {{\vre}}^{-\ell}$.
\end{enumerate}}
\end{lem}

Now let us apply the above lemma to approximate an arbitrary subset $O$ of a \hs\ $Y$ of $G$. The next lenma is a slightly easier version of \cite[Lemma 5.2]{KMi1}; we provide the proof for the sake of completeness.

\begin{lem}
\label{estimate} 
Let $Y$ be a quotient of a Lie group $G$ by a discrete subgroup. Then 
for any nonempty open subset $O$ of $Y$ and 
any $0<\varepsilon <1$ 
one can find {a nonnegative function $\psi_\varepsilon \in C
^\infty (Y)$} such that 
\begin{enumerate}
\item $ {1_{\sigma_\varepsilon O}}\le {\psi _\varepsilon } \le {1_O}$;
{ \item
${\left\| {{\psi _\varepsilon }} \right\|_{\ell{,2}} } \le {2^\ell}
{M_{G,\ell} }\cdot {\varepsilon ^{ - \ell}}$;
\item
${\left\| {{\psi _\varepsilon }} \right\|_{C^\ell} } \le {2^\ell}
{M_{G,\ell} }\cdot {\varepsilon ^{ - \ell}}$
},
\end{enumerate}
where $M_{G,\ell}$ is as in Lemma \ref{KMlem}.
\end{lem}}}


\subsection{Choosing   {$r_2$}}\label{r2}
Recall that as part of the proof of Theorem \ref{dimension drop 3} we need to define a bound $r_2$ for possible values of $r$. This bound will come from two ingredients. Namely, we define
\eq{rlimit}{r_2:=\frac{1}{2} \min (r_0, r'),} 
where
\begin{itemize} 
\item   $0<r'<1/4$ is chosen so that for any Lie subalgebra  $\frak p$ of $\frak g$ 
the exponential map from  $\frak p$ to $P = \exp(\frak p)$ is $2$-bi-Lipischitz on $
 B^{\frak p}(r') $; in particular, we will have \eq{bilip}{B^P(2r)\supset \exp\big(B^{\frak p}(r)\big)\supset B^P(r/2)\text{  for any }0 < r \le r'.}
 
\item 
{${r_0:=r_0(X)=\inf \{r_0(x)  : x\in X 
\}>0,}$
where 
$${r_0(x) :=}\,\sup\{r > 0: 
\text{the map }\pi_x:g\mapsto gx\text{ is injective on }B(r)\}.$$
(the \textsl{injectivity radius} at $x_0$).}
\end{itemize}

\subsection{Tessellations of $P$}
Now let $P$ be a {connected} subgroup of $H$ normalized by $F^+$. Following \cite{KM1}, 
say that an open subset $V$ of $P$ is a {\sl tessellation domain} for $P$ relative to a countable subset $\Lambda$ of $P$ if 
\begin{itemize}
\item
$\nu (\partial V) = 0$;
\item
$V \gamma_1 \cap V \gamma_2 = \varnothing$ for different $\gamma_1,\gamma_2 \in \Lambda$;
\item
$P = \bigcup\limits_{\gamma  \in \Lambda } {\overline V \gamma }.$
\end{itemize} 

{Note that $P$ is a connected   simply connected nilpotent Lie group. Denote $\frak p := \Lie(P)$  and $L:= \dim P$, and fix a Haar measure $\nu$ on $P$.
As shown in  \cite[Proposition 3.3]{KM1}, one can choose a basis of $\frak p$ such that for any $r  > 0$, 
{$\exp \left(rI_{\frak p}\right)$, where $I_{\frak p} \subset \frak p $ is the cube centered at $0$ with side length $1$ with respect to that basis, is a tessellation domain. Let us denote
{\eq{Vr}{V_r :=\exp \left({{\frac{r}{4 \sqrt{L}}}} 
{I_{\frak p}}\right)}} 
and choose  a countable $\Lambda_r\subset P$   such that $V_r$  is a tessellation domain 
relative  to $\Lambda_r$.}
{
Then it follows from \equ{bilip} that for any $0<r \le r'$ {one has}
{\eq{Bowen inc}{{B^P}\Big(\frac{r}{{16  \sqrt{L} }}\Big) \subset {V_r} \subset {B^P}\left(\frac r {4}\right).}}


\subsection{Bowen boxes}
 Note that the measure $\nu$ and the pushforward of the Lebesgue measure $\Leb$ on $\frak p$ {by the exponential map} are absolutely continuous with respect to each other with locally bounded Radon--Nikodym derivative. This implies that there exists $0< c_1< c_2$ such that 
\eq{lb1}{
c_1\Leb(A) \le \nu\big(\exp(A)\big)\le c_2\Leb(A)\quad \forall\, \text{measurable } A\subset B^{\frak p}(1).
}

\ignore{Applying a linear time change to the flow $g_t$, without {loss of}  generality we can assume that 
Define
\gr
{\eq{eigen value}
{\lambda_{\max} = 1.}}}
Define
\eq{b1}{{{\lambda_{\min}}} := \min \{ |\lambda |:\,\lambda \text{ is an eigenvalue of }\ad_{{g_1}}|_{\frak p}\} }
{and}
\eq{lambda''}{{{\lambda_{\max}}}:=\max \{ |\lambda |:\,\lambda \text{ is an eigenvalue of }\ad_{{g_1}}|_{\frak p}\}.}

{Using the  bi-Lipschitz property of $\exp$, we can conclude that}
for any $0<r \le r'$ and any $t>0$ one has
\eq{diam}{\diam(g_{-t}V_rg_t)<  2r    e^{-{{\lambda_{\min}}} t} 
.}
{Also let 
$\eta := \Tr\ad_{{g_1}}|_{\frak p}$;
clearly one then has} \eq{delta}{{\nu(g_{-t}A g_t) = e^{-\eta t}\nu(A)\text{ for any measurable }A\subset P.}}
{Let us now define} a {\sl Bowen $(t,r)$-box} in $P$ 
{to be} a set of the form $g_{-t}V_r \gamma g_t$  for some $\gamma \in P$ {and $t > 0$}. 
{Our approach to estimating \hd\ of various subsets of $P$ will be through covering them by Bowen boxes. 
We are going to need {three} results proved 
in \cite{KMi3}. The first one, a slight modification of \cite[Proposition 3.4]{KM1}}, gives   an upper bound for 
the number of $\gamma  \in \Lambda_r$ such that the Bowen box $g_{-t}V_r \gamma g_t$ has non-empty intersection with $V_r$:

\begin{lem} 
\label{covering} \cite[Lemma 4.1]{KMi3}There exists $c_0 > 0$ such that for any $0<r \le r'$ and for any {$t>\frac{\log 8}{{{\lambda_{\min}}}} $}  
\[\# \{ \gamma  \in \Lambda_r :{g_{ - t}}{\overline{V_r}\gamma}{g_t}  \cap {V_r} \ne \varnothing \}  \le e^{\eta t}  \left(1 + c_0e^{-{{\lambda_{\min}}} t}\right).\]
\end{lem}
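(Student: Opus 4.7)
The estimate is ultimately a measure comparison argument. The key observation is that, by the tessellation property, the translates $V_r \gamma$ for $\gamma \in \Lambda_r$ are pairwise disjoint, and therefore so are their conjugates $g_{-t} V_r \gamma g_t$ (since conjugation by $g_t$ is a homeomorphism of $P$). Each such conjugate has $\nu$-measure $e^{-\eta t} \nu(V_r)$ by \equ{delta}. The strategy is then: (i)~show that each conjugate meeting $V_r$ is contained in a small neighborhood of $V_r$; (ii)~estimate this neighborhood's volume; (iii)~divide.

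More precisely, suppose $g_{-t} \overline{V_r}\gamma g_t \cap V_r \ne \varnothing$ for some $\gamma \in \Lambda_r$. By \equ{diam} the diameter of the Bowen box $g_{-t} V_r \gamma g_t$ is at most $d_t := 2r e^{-\lambda_{\min} t}$, so the entire box sits inside the $d_t$-neighborhood $V_r^{(d_t)}$ of $V_r$. Let $N$ denote the cardinality we wish to bound. Summing the (disjoint) measures gives
\[
N \cdot e^{-\eta t}\nu(V_r) \;\le\; \nu\bigl(V_r^{(d_t)}\bigr),
\]
so it suffices to prove $\nu(V_r^{(d_t)})/\nu(V_r) \le 1 + c_0 e^{-\lambda_{\min} t}$.

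To estimate this ratio, I would transfer the question to the Lie algebra $\frak p$ via $\exp$, using the bi-Lipschitz bound \equ{bilip} together with the Radon--Nikodym comparison \equ{lb1}. The set $V_r$ corresponds (by its definition \equ{Vr}) to a cube of side $\frac{r}{4\sqrt{L}}$ in $\frak p$, and its Riemannian $d_t$-neighborhood is contained in the enlarged cube of side $\frac{r}{4\sqrt{L}} + C d_t$ for some $C$ depending only on the bi-Lipschitz constant. The volume ratio on the Lie algebra side is therefore
\[
\left(1 + \tfrac{C'\, d_t}{r}\right)^{L} \;=\; \bigl(1 + 2 C' e^{-\lambda_{\min} t}\bigr)^{L}.
\]
The hypothesis $t > \log 8 / \lambda_{\min}$ ensures $2 C' e^{-\lambda_{\min} t}$ stays in a bounded range (say $\le 1$, after absorbing $C'$ into $c_0$), and in that range the elementary inequality $(1+x)^L \le 1 + L\cdot 2^{L-1} x$ gives the desired bound of the form $1 + c_0 e^{-\lambda_{\min} t}$.

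\textbf{Main obstacle.} The conceptual argument above is short, but the bookkeeping of constants is the delicate part: one must track the two distortion factors from \equ{bilip} (passing between $V_r$ and a cube in $\frak p$) and from \equ{lb1} (passing from $\nu$ to Lebesgue), check that the resulting constant $c_0$ depends only on the group data (and not on $r$ or $t$), and verify that the threshold $t > \log 8/\lambda_{\min}$ is sufficient to put $2C'e^{-\lambda_{\min}t}$ in a region where the polynomial bound $(1+x)^L \le 1+c_0 x$ applies.
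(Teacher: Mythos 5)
Your plan is the standard tiling-plus-volume argument, and it is the same strategy as in the cited source ([KM1, Prop.\ 3.4] and [KMi3, Lemma 4.1]): the sets $g_{-t}V_r\gamma g_t$, $\gamma\in\Lambda_r$, are pairwise disjoint, each has $\nu$-measure $e^{-\eta t}\nu(V_r)$ by \equ{delta} (together with right-invariance of $\nu$ on the unimodular group $P$), and each one meeting $V_r$ lies in $V_r^{(d_t)}$ with $d_t=2re^{-\lambda_{\min}t}$ by \equ{diam}, so the count is at most $e^{\eta t}\,\nu\big(V_r^{(d_t)}\big)/\nu(V_r)$. All of that is correct.

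The step that fails as written is the last one. If you bound $\nu\big(V_r^{(d_t)}\big)$ from above by $c_2\cdot\Leb$ of the enlarged cube and $\nu(V_r)$ from below by $c_1\cdot\Leb$ of the original cube, as \equ{lb1} invites you to do, the ratio you obtain is $\frac{c_2}{c_1}\left(1+C'e^{-\lambda_{\min}t}\right)^{L}$, and since $c_2/c_1$ may be strictly larger than $1$ this does \emph{not} give $1+c_0e^{-\lambda_{\min}t}$ for large $t$. The leading coefficient $1$ is not cosmetic here: in Proposition \ref{delta} this factor is raised to the power $N-|J|$, and any constant $C>1$ in front of $e^{\eta t}$ would contribute a term of size $(1-z)\log C$ to the exponential growth rate, which would swallow the dimension drop. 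Two standard repairs: (i) write $\nu\big(V_r^{(d_t)}\big)=\nu(V_r)+\nu\big(V_r^{(d_t)}\smallsetminus V_r\big)$ and apply the two-sided comparison \equ{lb1} only to the collar $V_r^{(d_t)}\smallsetminus V_r$, whose Lebesgue measure is $\ll \left(\frac{r}{4\sqrt L}\right)^{L-1}d_t\ll \Leb\big(\tfrac{r}{4\sqrt L}I_{\frak p}\big)\,e^{-\lambda_{\min}t}$; the loss $c_2/c_1$ is then absorbed into $c_0$ rather than multiplying the main term; or (ii) observe that since $P$ is connected, simply connected and nilpotent, the pushforward $\exp_*\Leb$ is itself a Haar measure on $P$, hence proportional to $\nu$, so the ratio $\nu\big(V_r^{(d_t)}\big)/\nu(V_r)$ can be computed exactly as a ratio of Lebesgue measures of cubes in $\frak p$ (up to the bi-Lipschitz distortion of the neighborhood, which only affects $c_0$). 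With either fix, and with the observation you already make that $t>\frac{\log 8}{\lambda_{\min}}$ keeps $d_t/r<\frac14$ so that the neighborhood stays inside the region where \equ{bilip} applies and $(1+x)^L\le 1+c_0x$ holds on the relevant bounded range, the argument goes through.
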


{The second one gives us an upper bound for the number of balls of radius $re^{- {{\lambda_{\max}}} t}$ needed to cover a Bowen {box}. 
\begin{lem}\cite[Lemma 7.4]{KMi3}
\label{coveringballs} There exists $C_1 > 0$ such that for any $0<r <1$ and any {$t>0 $},  any Bowen $(t,r)$-box in $P$ can be covered with at most  $C_1 e^{({{\lambda_{\max}}} L- \eta)t}$
balls in $P$ of radius $re^{- {{\lambda_{\max}}} t}$.
\end{lem}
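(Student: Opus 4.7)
\textbf{Proof plan for Lemma \ref{coveringballs}.}
The plan is to transfer everything to the Lie algebra $\mathfrak{p}$ via the exponential map, use the spectral decomposition of $\Ad(g_{-t})|_{\mathfrak{p}}$ to see the image of the defining cube as a parallelepiped with controllable side lengths, and then carry out an elementary box-covering count. First, since multiplication in $P$ by $g_{-t}\gamma g_t\in P$ is an isometry of the right-invariant metric on $P$, it suffices to bound the number of balls of radius $re^{-\lambda_{\max}t}$ needed to cover $g_{-t}V_r g_t$ itself. Recalling $V_r = \exp\!\big(\tfrac{r}{4\sqrt L}I_{\mathfrak p}\big)$ from \equ{Vr}, and using that $g_{-t}\exp(X)g_t = \exp\bigl(\Ad(g_{-t})X\bigr)$, we have
$$g_{-t}V_r g_t = \exp\!\Big(\Ad(g_{-t})\cdot\tfrac{r}{4\sqrt L}I_{\mathfrak p}\Big).$$
By the bi-Lipschitz property of $\exp$ recorded in \equ{bilip}, covering this set in $P$ is comparable (up to a factor depending only on $L$) to covering the subset $\Ad(g_{-t})\cdot \tfrac{r}{4\sqrt L}I_{\mathfrak p}\subset\mathfrak p$ by Euclidean balls of radius $\tfrac12 re^{-\lambda_{\max}t}$.

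Next I would choose a basis of $\mathfrak p$ adapted to the spectral decomposition of $\Ad(g_1)|_{\mathfrak p}$. Because $F^+$ is $\Ad$-diagonalizable, we can arrange $\mathfrak p = \bigoplus_{i=1}^L \mathbb R e_i$ (grouping conjugate pairs of complex eigenvalues into real $2$-dimensional blocks if necessary) so that $\Ad(g_{-t})$ acts on the $i$-th factor with operator norm bounded between $c_1 e^{-\lambda_i t}$ and $c_2 e^{-\lambda_i t}$ for constants $c_1,c_2$ independent of $t$, where $\lambda_{\min}\le\lambda_i\le\lambda_{\max}$ and $\sum_{i=1}^L \lambda_i = \eta$. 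In this basis $\tfrac{r}{4\sqrt L}I_{\mathfrak p}$ is contained in a product of intervals of length $O(r)$, and its image under $\Ad(g_{-t})$ is contained in a parallelepiped whose extent in the $i$-th direction is $\le C' r e^{-\lambda_i t}$ for some absolute constant $C'$.

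The covering count is now immediate: a box in $\mathbb R^L$ of sides $s_i$ can be covered with at most $\prod_{i=1}^L \bigl(\lceil s_i/\rho\rceil+1\bigr)$ Euclidean balls of radius $\rho$ (after the standard reduction to subcubes). With $s_i = C' r e^{-\lambda_i t}$ and $\rho = \tfrac12 re^{-\lambda_{\max}t}$ we obtain $s_i/\rho \le 2C'\, e^{(\lambda_{\max}-\lambda_i)t}$. Since $\lambda_{\max}-\lambda_i\ge 0$, each factor satisfies
$$\lceil s_i/\rho\rceil+1 \ \le\ (2C'+2)\,e^{(\lambda_{\max}-\lambda_i)t},$$
and multiplying over $i=1,\dots,L$ yields the total bound
$$(2C'+2)^L\, e^{\bigl(L\lambda_{\max}-\sum_i \lambda_i\bigr)t}\ =\ (2C'+2)^L\, e^{(L\lambda_{\max}-\eta)t},$$
which is the desired estimate after setting $C_1$ equal to this constant (absorbing also the bi-Lipschitz distortion of $\exp$).

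The argument is essentially bookkeeping; the only place where one has to be slightly careful is in the choice of basis that simultaneously respects the eigenspace decomposition of $\Ad(g_1)|_{\mathfrak p}$ (possibly with $2$-dimensional real blocks for conjugate complex eigenvalues) and gives uniform control on the operator norm of $\Ad(g_{-t})$ restricted to each block. Since the paper's standing hypothesis that $F^+$ is $\Ad$-diagonalizable rules out genuine Jordan blocks, no polynomial-in-$t$ factors appear, and the counting reduces cleanly to the product formula above.
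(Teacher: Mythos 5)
Your argument is correct, but note that the paper itself does not prove this lemma: it is imported verbatim from \cite[Lemma 7.4]{KMi3}, so there is no in-text proof to match. Your route (right-translate by $g_{-t}\gamma g_t$ to reduce to $g_{-t}V_rg_t$, push to $\frak p$ via $\exp$, decompose $\Ad(g_{-t})|_{\frak p}$ into real eigenblocks, and count subcubes of a parallelepiped with sides $\asymp re^{-\lambda_i t}$) is a clean elementary derivation; the identity $\sum_i\lambda_i=\eta$ (summing \emph{real parts} of the log-eigenvalues, with each $2$-dimensional complex block counted twice) is exactly what turns $\prod_i e^{(\lambda_{\max}-\lambda_i)t}$ into $e^{(\lambda_{\max}L-\eta)t}$. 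The alternative argument --- the one sketched in a commented-out block of this very source file, and essentially the one behind \cite[Lemma 7.4]{KMi3} --- is measure-theoretic: take a cover of the Bowen box $B$ by balls of radius $re^{-\lambda_{\max}t}$ whose multiplicity is bounded by the Besicovitch constant of $P$; their union lies in the $re^{-\lambda_{\max}t}$-neighborhood of $B$, whose measure is $\ll\nu(B)=e^{-\eta t}\nu(V_r)$ because the smallest side of the parallelepiped is $\gg re^{-\lambda_{\max}t}$; dividing by $\nu\big(B^P(re^{-\lambda_{\max}t})\big)\asymp r^Le^{-\lambda_{\max}Lt}$ gives the same count. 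Your version trades the Besicovitch machinery for adapted-basis bookkeeping; the volume version needs no choice of basis but does need the comparison \equ{lb1} and the neighborhood estimate. Two points worth making explicit in your write-up: (i) the lemma allows all $0<r<1$ while \equ{bilip} is stated only for $r\le r'$, so you should invoke bi-Lipschitzness of $\exp$ on the fixed ball $B^{\frak p}(1)$ with a possibly larger (still uniform) constant; (ii) for a complex eigenvalue $\mu$ the contraction rate of the corresponding real block is $e^{-\mathrm{Re}(\mu)t}$ whereas $\lambda_{\max}$ is defined via $|\mu|$, so the inequality you actually need is $\mathrm{Re}(\mu)\le|\mu|\le\lambda_{\max}$, which of course holds.
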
}

{The {third} result is a direct consequence of property (EEP); it is a simplified version of \cite[Proposition {4.4}]{KMi3}.}

\begin{prop} \label{exponential mixing} {Let ${F^+}$ be a one-parameter 
{sub}semigroup of $G$ and $P$ a subgroup of $G$ with property {\rm (EEP)} with respect to $F^+$. Then there exist 
{$t_1\ge 1$ and ${\lambda '
>0}$} such that
for any open $O \subset X$,  
any $x\in X$,  
any $r \le r_2$
and any $t \ge t_1$ one has
{
$${ {\mathop  \nu \big(\{ h \in V_r: g_thx \in O\} \big)\ge \nu\left(V_r\right)\mu ({\sigma _{e^{ -{ \lambda '} t}}}O)  - {e^{ - {{\lambda '} t}}}.}}$$}}
\end{prop}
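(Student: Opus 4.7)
The plan is to derive this bound by a direct application of the (EEP) inequality \equ{eep} to smooth approximations of the indicator functions of $V_r$ and of an inner core of $O$. The only real subtlety is to balance the Sobolev-norm blow-up of the approximations against the exponential gain $e^{-\lambda t}$ provided by (EEP), and to choose $\lambda'$ small enough so that the resulting error is comfortably absorbed by $e^{-\lambda' t}$.

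First, I would fix $\lambda' > 0$ much smaller than $\lambda/(\ell+1)$ (where $\ell$ is the integer appearing in (EEP)), set $\varepsilon := e^{-\lambda' t}$, and construct two smooth cut-offs. For the target set, pick $\psi \in C^\infty(X)$ with $\mathbf{1}_{\sigma_\varepsilon O} \le \psi \le \mathbf{1}_O$ and $\max(\|\psi\|_{C^1}, \|\psi\|_\ell) \ll \varepsilon^{-\ell}$; this is standard via a mollification of $\mathbf{1}_{\sigma_{\varepsilon/2}O}$ by a bump supported in $B^G(\varepsilon/2)$, using the compactness of $X$ and the positive injectivity radius. For the source set, use the explicit description of $V_r = \exp\!\big(\tfrac{r}{4\sqrt{L}} I_{\mathfrak p}\big)$ in \equ{Vr} to produce $f \in C^\infty(P)$ with $\supp f \subset V_r$, $0 \le f \le \mathbf{1}_{V_r}$, $\int_P f\,d\nu \ge \nu(V_r) - e^{-\lambda' t}$, and $\|f\|_{C^\ell} \ll e^{\ell\lambda'' t}$ for some $\lambda'' > \lambda'$ chosen so that $\ell\lambda'' + (\ell+1)\lambda' < \lambda$. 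Since $V_r \subset B^P(1)$, $f$ satisfies the support hypothesis of (EEP).

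The next step is to invoke (EEP) for $f$ and $\psi$: since $f \le \mathbf{1}_{V_r}$ and $\psi \le \mathbf{1}_O$, the integral $\int_P f(h)\psi(g_t h x)\,d\nu(h)$ is bounded above by $\nu\{h \in V_r : g_t h x \in O\}$. From (EEP) we obtain
\[
\int_P f(h)\psi(g_t h x)\,d\nu(h) \;\ge\; \Big(\nu(V_r) - e^{-\lambda' t}\Big)\,\mu(\sigma_\varepsilon O) \;-\; C\,\varepsilon^{-\ell}\,\|f\|_{C^\ell}\,e^{-\lambda t},
\]
and the choice of $\lambda'$ and $\lambda''$ makes the second error term bounded by $\tfrac12 e^{-\lambda' t}$ for all $t$ larger than some threshold $t_1$. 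The first error term contributes at most $e^{-\lambda' t}\mu(\sigma_\varepsilon O) \le e^{-\lambda' t}$. Combining yields
\[
\nu\{h \in V_r : g_t h x \in O\} \;\ge\; \nu(V_r)\,\mu(\sigma_{e^{-\lambda' t}}O) \;-\; e^{-\lambda' t},
\]
after absorbing constants by a slight decrease of $\lambda'$, which is the desired inequality.

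The main technical obstacle is the parameter-balancing in the last step: one must construct the bump $f$ so that its sup-norm loss $\|f\|_{C^\ell} \sim \varepsilon_P^{-\ell}$ (for a smoothing scale $\varepsilon_P$ on $P$) and the measure deficit $\nu(V_r) - \int f\,d\nu \sim \varepsilon_P$ both fit within the $e^{-\lambda' t}$ budget; this is why one must take $\lambda'$ strictly less than $\lambda/(\ell+1)$ rather than the naive $\lambda/\ell$. The hypothesis $r \le r_2 \le r'/2$ enters only to guarantee that $V_r$ has well-behaved geometry via \equ{bilip}--\equ{Bowen inc}, so the smoothing construction is uniform in $r$, and $t_1$ is chosen large enough to ensure both $t_1 \ge t_0$ (from the statement of (EEP)) and the absorption of error terms displayed above.
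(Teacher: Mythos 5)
Your argument is correct and is essentially the standard proof: the paper itself does not reproduce one, stating the proposition as ``a direct consequence of property (EEP)'' and citing \cite[Proposition 4.4]{KMi3}, whose proof (and the approximation lemmas from \cite{KMi1} on which it rests) proceeds exactly as you do --- smooth inner approximations of $1_{V_r}$ and $1_O$ with Sobolev/$C^\ell$ norms of size $\varepsilon^{-\ell}$, the inequality \equ{eep}, and the choice $\varepsilon = e^{-\lambda' t}$ with $\lambda'$ small relative to $\lambda/(\ell+1)$ so the error is absorbed into $e^{-\lambda' t}$. Your balancing condition $\ell\lambda'' + (\ell+1)\lambda' < \lambda$ and your handling of the support condition $\supp f \subset B^P(1)$ and of the threshold $t_1 \ge t_0$ are all in order.
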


\section{Effective equidistribution of translates and a measure estimate }\label{covcpt}

{From now on we will work with $F^+$ and $P$ as in Theorem \ref{dimension drop 3}, and for the rest of the paper fix a positive $r \le r_2$, where $r_2$ is
as in \equ{rlimit}}.  {Note that by the countable stability of Hausdorff dimension, in order to prove Theorem \ref{dimension drop 3} it suffices, for a subset $O$ of $X$ and $0 < \delta< 1$, to get a uniform (in $x\in X$) upper bound for the Hausdorff dimension of the set \eq{S}{\begin{aligned}{S_{x, \delta }(O)}&:=\{ h \in V_r: hx \in E_\delta(F^+, O) \} \\ & =\left\{h \in {V_r}: \limsup_{T \rightarrow \infty} \frac{1}{T} \int_0^T 1_{O^c}(g_thx)\,dt \ge \delta\right\}.\end{aligned}}
For this it will be convenient to discretize the above definition. Namely let us introduce the following notation:  given $T>0$, a subset $S$ of $X$, $x \in X$ and  $0<\delta <1$, let us define 
\eq{A}{A_{x, \delta}(T,S):= \left \{h \in V_r: \frac{1}{T} \int_0^{T} 1_{S}(g_thx)\,dt \ge \delta \right \}.          }
In the following proposition we find the relation between the set $S_{x, \delta }(O)$ and the family of sets $\{A_{x, \delta}(NT,O^c)\}_{N \in \N}$. 
\begin{prop}\label{lsup} 
For any $T>0$ and any $x \in X$ and we have
$${S_{x, \delta }(O)= \limsup_{N\in\N,\,N \rightarrow \infty} A_{x, \delta}(NT,O^c)}$$
\end{prop}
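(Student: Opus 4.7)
The statement is a pointwise identity on $V_r$, so fix $h \in V_r$ and $x \in X$ and set
$$F_h(s) := \tfrac{1}{s}\int_0^s 1_{O^c}(g_t h x)\,dt,\qquad s > 0;$$
this is continuous on $(0,\infty)$ and takes values in $[0,1]$. Unpacking \equ{S} and \equ{A}, one has
$$h \in S_{x,\delta}(O) \iff \limsup_{s \to \infty} F_h(s) \geq \delta,\qquad h \in A_{x,\delta}(NT, O^c) \iff F_h(NT) \geq \delta,$$
so the proposition reduces to the scalar equivalence
$$\limsup_{s \to \infty} F_h(s) \geq \delta \iff F_h(NT) \geq \delta \text{ for infinitely many } N \in \N.$$

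The ``$\Leftarrow$'' direction is immediate from the fact that $\limsup_{s \to \infty} F_h(s) \geq \limsup_{N \to \infty} F_h(NT)$. For ``$\Rightarrow$'' I plan to use a routine continuous-to-discrete averaging comparison. Given $s > T$, let $N(s) := \lceil s/T \rceil$, so that $N(s) T \in [s, s+T)$. Splitting $\int_0^{N(s) T} = \int_0^s + \int_s^{N(s) T}$ and using $0 \le 1_{O^c} \le 1$ together with $N(s) T - s < T$, a direct estimate yields
$$\bigl|F_h(N(s) T) - F_h(s)\bigr| \;\le\; \frac{2(N(s)T - s)}{N(s)T} \;\le\; \frac{2}{N(s)} \;\xrightarrow[s \to \infty]{}\; 0.$$
Choosing a sequence $s_k \to \infty$ along which $F_h(s_k) \to \limsup_{s \to \infty} F_h(s) \geq \delta$ and setting $N_k := N(s_k) \to \infty$, I obtain $F_h(N_k T) \to \limsup_{s \to \infty} F_h(s) \geq \delta$, which yields $F_h(N_k T) \geq \delta$ along a cofinal subsequence.

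I do not foresee any substantial obstacle: the entire argument is pointwise and rests solely on continuity of the Birkhoff average $F_h$ in the time parameter together with the elementary comparison above. The only mildly delicate point is the borderline case where the limsup equals $\delta$ exactly and is approached strictly from below, which is handled by choosing the realizing sequence $s_k$ appropriately; in any event this case does not affect the subsequent Hausdorff dimension bound, since only the inclusion $S_{x,\delta}(O) \subseteq \limsup_N A_{x,\delta}(NT, O^c)$ is required to transfer a covering-type estimate for the sets $A_{x,\delta}(NT, O^c)$ to one for $S_{x,\delta}(O)$.
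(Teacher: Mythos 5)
Your argument is essentially the paper's: both reduce the set identity to the pointwise comparison of the continuous-time average $F_h(R)$ with $F_h(NT)$ for $N\approx R/T$, via the elementary estimate $|F_h(R)-F_h(NT)|\lesssim 1/N$ (the paper takes $N=\lfloor R/T\rfloor$ and a one-sided bound where you take $N=\lceil s/T\rceil$ and a two-sided one). One correction to your closing remarks, however: the borderline case where $\limsup_{s}F_h(s)=\delta$ but $F_h(s)<\delta$ for \emph{every} $s$ is not ``handled by choosing the realizing sequence $s_k$ appropriately'' --- no choice helps, since then $F_h(NT)<\delta$ for all $N$, so such an $h$ lies in $S_{x,\delta}(O)$ but not in $\limsup_N A_{x,\delta}(NT,O^c)$ --- and it obstructs precisely the inclusion $S_{x,\delta}(O)\subseteq\limsup_N A_{x,\delta}(NT,O^c)$ that the dimension bound requires, not the harmless converse as you assert. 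To be fair, the paper's own proof carries the same imprecision: it establishes only the equality of the numerical limsups, from which the stated set equality does not literally follow. The standard repair is to prove instead $S_{x,\delta}(O)\subseteq\limsup_N A_{x,\delta'}(NT,O^c)$ for every $\delta'<\delta$ and let $\delta'\uparrow\delta$ at the end; this costs nothing, since the final codimension bound is continuous in $\delta$ and the covering estimates are in any case applied with thresholds $\varepsilon<\delta$.
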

\begin{proof}
In view of definition of $S_{x,\delta}(O)$, it suffices, {for a fixed $T>0$,}  to prove that
$$\begin{aligned} \limsup_{R \rightarrow \infty} \frac{1}{R} \int_0^R 1_{O^c}(g_thx)\,dt&=\limsup_{N\in\N,\,N \rightarrow \infty} \frac{1}{NT} \int_0^{NT} 1_{O^c}(g_thx)\,dt\\ 
& = \lim_{N_0 \rightarrow \infty}\sup_{N\ge N_0} \frac{1}{NT} \int_0^{NT} 1_{O^c}(g_thx)\,dt.\end{aligned} $$
\ignore{By definition of $\lim \, \sup$ we have
\begin{align*}
& \lim \sup_{S \rightarrow \infty} \frac{1}{S} \int_0^S 1_{O^c}(g_thx)\,dt   \\
& = \lim_{T_0 \rightarrow \infty} \underset{S \ge T_0}{\sup}\frac{1}{S} \int_0^S 1_{O^c}(g_thx)\,dt \\
& = \lim_{N_0 \rightarrow \infty} \underset{S \ge N_0T}{\sup}\frac{1}{S} \int_0^S 1_{O^c}(g_thx)\,dt
\end{align*}
So it is enough to prove that
$$ \lim_{N_0 \rightarrow \infty} \underset{N \ge N_0}{\sup}\frac{1}{NT} \int_0^{NT} 1_{O^c}(g_thx)\,dt  =   \lim_{N_0 \rightarrow \infty} \underset{S \ge N_0T}{\sup}\frac{1}{S} \int_0^S 1_{O^c}(g_thx)\,dt.       $$
Clearly, we have
$$    \lim_{N_0 \rightarrow \infty} \underset{N \ge N_0}{\sup}\frac{1}{NT} \int_0^{NT} 1_{O^c}(g_thx)\,dt  \le   \lim_{N_0 \rightarrow \infty} \underset{S \ge N_0T}{\sup}\frac{1}{S} \int_0^S 1_{O^c}(g_thx)\,dt          $$
Thus, we just need to show the reverse inequality.}
 Let $N_0 \in \N$, and let $R \ge N_0T$. Then one can find $N \ge N_0$ and $0\le R' < T$ such that $R=NT+ R'$. Hence
$$
\begin{aligned}
 \frac{1}{R} \int_0^R 1_{O^c}(g_thx)\,dt 
 =&\frac{1}{NT+ R'} \int_0^{NT+ R'} 1_{O^c}(g_thx)\,dt   \\
 \le& \frac{1}{NT} \int_0^{NT+ R'} 1_{O^c}(g_thx)\,dt \\
 = & \frac{1}{NT} \left( \int_0^{NT} 1_{O^c}(g_thx)\,dt+  \int_{NT}^{NT+R'} 1_{O^c}(g_thx)\,dt \right) \\
 \underset{(1_{O^c} \le 1,\, R'  \le T)}{\le}&\frac{1}{NT}  \int_0^{NT} 1_{O^c}(g_thx)\,dt + \frac{1}{N} \\
 \underset{(N \ge N_0)}  {\le } &\frac{1}{NT} \int_0^{NT} 1_{O^c}(g_thx)\,dt + \frac{1}{N_0} .
\end{aligned}$$
Therefore, we get
$$  
 \limsup_{R \rightarrow \infty} \frac{1}{R} \int_0^R 1_{O^c}(g_thx)\,dt \le   \lim_{N_0 \rightarrow \infty} \underset{N \ge N_0}{\sup}\frac{1}{NT} \int_0^{NT} 1_{O^c}(g_thx)\,dt.       $$
{The reverse inequality is obvious.}
\end{proof}}

\ignore{Our goal in this section is to prove   a lower bound for the measure of $h\in V_r$ such that, {for an open subset $O$ of $X$, $x\in X$, $0 < \delta < 1$   and 
$T>0$,}  the proportion of $t\in [0,T]$ that the $g_t$-orbit of $hx$ spends outside of $O$ is at least $\delta$. {More precisely, for a subset $S\subset X$ let us define
$$A_{x, {\delta}}
 (T, S):=\left\{h \in {V_r}: \frac{1}{T} \int_{0}^{iT} 1_{S}(g_thx)\,dt \ge {\delta} 
  \right \}.$$}}}

\ignore{Since we are assuming that $X$ is compact, 
the \textit{injectivity radius} of $X$. which is defined as follows, is positive:
$$r_0:= r_0(X)= \sup\{r > 0: 
\pi_x\text{ is injective on }B(r)\  \ \forall\,x\in K\},$$
where for any $x \in X$, $\pi_x$ is the map $G\to X$ given by $\pi_x(g) := gx$. }

\smallskip
\ignore{Here
{\eq{exponent}{\lambda' := \frac{\lambda}{4 \ell +2} }
\ignore{and
\eq{a'}{a':=\max \left(a,\frac{1}{\lambda '} \log \left({4^\ell M_{\ell} {M'_{\ell}} {{E}}} + 2^{p} {c_2} \right), \frac{\log 8}{{{\lambda_{\min}}}} \right),}}
}}

\ignore{Let $\mu^G$ be the Haar measure on $G$ which locally projects to $\mu$, and 
let us choose Haar measures $\nu^-$, $\nu^0$ and $\nu$ on  $P^-$, $P^0$  and $P$
respectively,  normalized  so that 
$\mu$ is 
locally almost the product of $\nu^-$, $\nu^0$ and $\nu$.
More precisely, see \cite[Ch.~VII, \S 9, Proposition 13]{Bou},
$\mu$ can be expressed via $\nu^-$, $\nu^0$ and $\nu$ in the
following way: for any $\varphi\in L^1(G,\mu^G)$ supported on a small neighborhood of idenity,
\eq{bou}{
{\int_{G} \varphi(g)\, d\mu(g)} = 
{\int_{P^-\times P^0\times P}\varphi(h^-h^0h)\Delta(h^0)\,d\nu^-(h^-)\,
d\nu^0(h^0) \, d\nu(h)}\,,
}
where $\Delta$ is the modular function of (the non-unimodular group)
$\tilde P$. 
 Also, given  $x\in X$, let us denote by $\pi_x$ the map  $G\to X$ given by $\pi_x(g) = gx$, and by  $r_0(x)$ the \textsl{injectivity radius} of $x$, defined as $$
\sup\{r > 0: 
\pi_x\text{ is injective on }B(r)\}.$$
Let us denote by $r_0(X)$ the \textsl{injectivity radius} of $X$: $$
r_0(X) := \inf_{x\in X}r_0(x) = \sup\{r > 0: 
\pi_x\text{ is injective on }B(r)\  \ \forall\,x\in X \}.$$
It is easy to see that since $X$ is compact, $r_0(X)>0$.
\bigskip}

\ignore{
\begin{defn}
\label{exp decay} {Let $F^+ = \{g_t : t\ge 0\}$ be a one-parameter subsemigroup of $G$, and let $X = \ggm$ where $\Gamma$ is a lattice in $G$.} We say that a flow $(X,F^+)$ is {\sl exponentially
mixing}  if there exist {$\gamma  > 0$ and $\ell\in\Z_+$}
such that for any   $\varphi, \psi \in  C^\infty_2(X) $
 and for any $t\ge 0$ one has
{\eq{em}{\left| {({g_t}\varphi ,\psi )- \int_X\varphi\, d\mu \int_X\psi\, d\mu}\right|   \ll  {e^{ - \gamma t}}{\left\| \varphi  \right\|_{\ell{,2}}} {\left\| \psi  \right\|_{\ell{,2}}}.}}
\end{defn}
The following proposition results from []
\begin{prop}
Let $G$ be a semisimple Lie group without compact factor. There exists $C_1>0$ such that for any   $\varphi, \psi \in  C^\infty_2(X) $
 and for any $t\ge 0$ one has
 {\eq{em1}{\left| {({g_t}\varphi ,\psi )- \int_X\varphi\, d\mu \int_X\psi\, d\mu}\right|   \le C_1  {e^{ - \lambda_{\max}  t}}{\left\| \varphi  \right\|_{\ell{,2}}} {\left\| \psi  \right\|_{\ell{,2}}}.}}
\end{prop}}

\ignore{ \begin{prop} \cite[Proposition 5.3]{KMi1} \label{exponential mixing} Let ${F^+}$ be a one-parameter {$\Ad$-}diagonalizable
{sub}semigroup of $G$. Then there exist positive constants $r_2, b_0,b,E, \lambda $ such that 
for any open $U\subset X$, any $r$ satisfying:
\eq{r0}{0<r< r' :=  \min \big(r_0(X),{r_2} \big),}
and any $t$ satisfying
\eq{t estimate}{t > b_0 +b'  \log \frac{1}{r},}
one has
\eq{conclusion}{
\begin{aligned}
& \mathop {\sup }\limits_{x \in X} \nu \left(\left \{h \in B^P \left( \frac{r}{16 \sqrt{L}}\right) : g_thx \in O^c \right \}  \right)\\
& \le   \nu\left(B^P \Big(\frac{r}{16 \sqrt{L}} \Big)\right) \cdot \left( 1-\mu ({\sigma _{r}}U)    + \frac{Ee^{-\lambda t}}{r^{L}}  \right)
\end{aligned}}
$a'=a+\frac{\log 2}{\lambda '}, b'= \frac{1}{\lambda'},\lambda' = \frac{1}{2}\min(b, \frac{\lambda}{1+2 \ell}),$ and $E \ge 1$ only depends on $C, \ell,$ and $P$.
\end{prop}}

\ignore{
\begin{prop} \label{em integral} {Let ${F^+}$ be a one-parameter {$\Ad$-}diagonalizable
{sub}semigroup of $G$, and $P$ a subgroup of $G$ with property {\rm (EEP)}. Then there a exist positive constant $E' $ such that 
for any open $U \subset X$, any $0<r<r'$ where $r'$ is as in 
\equ{r0}, and any $T>0$ one has}
\eq{conclusion1}{
\begin{aligned}
& \mathop {\sup }\limits_{x \in X} \,\, \nu \left(A_{x, \delta}(T,\frac{r}{16 \sqrt{L}},{\partial _{r/2}}{U^c}, \delta, \{1\})\right) \\
& \le {\delta}^{-1} \cdot \nu\left(B^P \Big(\frac{r}{16 \sqrt{L}} \Big)\right) \cdot \left( 1-\mu ({\sigma _{r}}U) +  \frac{E'}{r^{L}T}  \right). 
\end{aligned}}

\end{prop}

Given $S \subset X$, $x \in X$,  $r>0$, $T>0$, $0<{\delta} < 1$  and $J \subset \N$, let us define the following set:
 $$A_{x, {\delta}}
 (T,{r},J,S):=\left\{h \in \overline{V_r}: \frac{1}{T} \int_{(i-1)T}^{iT} 1_{S}(g_thx)\,dt \ge {\delta}  \,\,\,\,\, \forall\, i \in J \right \}.$$}
The next proposition gives {an} upper estimate for the measure of 
{$A_{x, {\delta}}(T,O^c)$.  We {will} use it  
in \S \ref{cover} to obtain an upper bound for the number of Bowen $(r,T)$-boxes in $P$ needed to cover 
this set} (see Corollary \ref{sub count}). 

\begin{prop} \label{exponential mixing1} 
For all 
{$x \in X$,  $0<\vre <1$,  for 
any {open} $O \subset X$ 
and for all}
{\eq{Tr}{T >T_r:= \max \left(t_1,\, \frac{1}{\l'} \log \frac{2}{r},\, \frac{1}{\l'} \log \frac{(4 \sqrt{L})^L}{ c_1 \l' r^L} {, \frac{\log 8}{{{\lambda_{\min}}}}} \right),}} where {$c_1$ is as in \equ{lb1}} and $t_1,\l'$ 
are as in Proposition \ref{exponential mixing},
one has
\eq{main}{\begin{aligned}
  \nu \big(A_{x, 1-\vre
 }(T,
  O)             \big)
  \ge 
 \nu\left(V_r\right)\left(1 -    {\frac{1}{\vre}} 
    \Big( 1-\mu ({\sigma _{r/2}}O) +\frac{{T_r+1}}{T} \Big)\right).
 \end{aligned}
}
\ignore{Here
\eq{T1}{T_1:=\max \left(a,\frac{1}{\lambda '} \log \left({
M_{\ell} {M'_{\ell}} {{E}}} +{c_2}2^L \right),\frac{\log \frac{1}{\lambda ' c_1}}{\lambda'} \right) +  \max \left(b, \frac{L}{\lambda '} \right) \log \frac{1}{r_0(X)}, }}

\ignore{
{\eq{conclusion}{ {\mathop {\inf }\limits_{x \in {\partial _{r}}{U^c}} \nu \left(\left \{h \in B^P \left( \frac{r}{16 \sqrt{L}}\right) : g_thx \in O^c \right \}\right)\ge \nu\left(B^P\Big(\frac{r}{16 \sqrt{L}} \Big)\right)\mu ({\sigma _{r}}U)  - E{e^{ - \lambda 't}},}}}
where $a'=a+\frac{\log 2}{\lambda '}, b'= \frac{1}{\lambda'},\lambda' = \frac{1}{2}\min(b, \frac{\lambda}{1+2 \ell}),$ and $E \ge 1$ only depends on $C, \ell,$ and $P$.
\bigskip}
\end{prop}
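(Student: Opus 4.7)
The plan is to bound from below $\nu\big(A_{x,1-\vre}(T,O)\big)$ by bounding from above the complementary measure
$$\nu(B),\text{ where }B := \left\{h\in V_r:\tfrac{1}{T}\int_0^T 1_{O^c}(g_thx)\,dt > \vre\right\},$$
via a Chebyshev-type inequality applied to the non-negative function $g(h):=\tfrac{1}{T}\int_0^T 1_{O^c}(g_thx)\,dt$. Since $g\ge 0$ and $g>\vre$ on $B$, one has $\vre\,\nu(B)\le \int_{V_r} g\,d\nu$, so it suffices to bound $\int_{V_r} g\,d\nu$ from above.

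First I would apply Fubini to write
$$\int_{V_r} g(h)\,d\nu(h)=\frac{1}{T}\int_0^T \nu\big(\{h\in V_r:g_thx\in O^c\}\big)\,dt=\nu(V_r)-\frac{1}{T}\int_0^T \nu\big(\{h\in V_r:g_thx\in O\}\big)\,dt.$$
Then I would split the $t$-integral at $T_r$. On $[0,T_r]$ I use the trivial lower bound $0$ for the integrand. On $[T_r,T]$ I invoke Proposition~\ref{exponential mixing}: since $T_r\ge t_1$ and $T_r\ge\frac{1}{\l'}\log\frac{2}{r}$ (so that $e^{-\l't}\le r/2$ for all $t\ge T_r$, giving $\sigma_{e^{-\l't}}O\supseteq\sigma_{r/2}O$), we get
$$\nu\big(\{h\in V_r:g_thx\in O\}\big)\ge \nu(V_r)\mu(\sigma_{r/2}O)-e^{-\l't}\qquad\forall\,t\ge T_r.$$

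Integrating this in $t$ over $[T_r,T]$ and using $\int_{T_r}^\infty e^{-\l't}\,dt=\l'^{-1}e^{-\l' T_r}$ yields
$$\frac{1}{T}\int_0^T \nu\big(\{h\in V_r:g_thx\in O\}\big)\,dt\ge \frac{T-T_r}{T}\,\nu(V_r)\,\mu(\sigma_{r/2}O)-\frac{e^{-\l' T_r}}{\l' T}.$$
Now the third term in the definition \equ{Tr} of $T_r$, namely $T_r\ge\frac{1}{\l'}\log\frac{(4\sqrt L)^L}{c_1\l' r^L}$, is tailored to give $e^{-\l' T_r}\le c_1\l' (r/4\sqrt L)^L\le \l'\,\nu(V_r)$ (the last inequality using \equ{Vr} and \equ{lb1}), so $\frac{e^{-\l'T_r}}{\l'T}\le \frac{\nu(V_r)}{T}$. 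Combining and using $1-\tfrac{T-T_r}{T}\mu(\sigma_{r/2}O)\le 1-\mu(\sigma_{r/2}O)+\tfrac{T_r}{T}$, I obtain
$$\int_{V_r}g\,d\nu\le \nu(V_r)\left(1-\mu(\sigma_{r/2}O)+\frac{T_r+1}{T}\right).$$
Dividing by $\vre$ gives the desired upper bound on $\nu(B)$, and subtracting from $\nu(V_r)$ produces \equ{main}.

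The proof is really just a bookkeeping exercise once Proposition~\ref{exponential mixing} is in hand; the only slightly delicate point is matching the three lower-bound conditions on $T_r$ in \equ{Tr} to the three places where they are needed: $T_r\ge t_1$ to invoke the EEP-based estimate, $T_r\ge \frac{1}{\l'}\log\frac{2}{r}$ to replace $\sigma_{e^{-\l't}}O$ by $\sigma_{r/2}O$, and $T_r\ge \frac{1}{\l'}\log\frac{(4\sqrt L)^L}{c_1\l' r^L}$ to absorb the tail error $\frac{e^{-\l'T_r}}{\l'T}$ into a clean $\frac{\nu(V_r)}{T}$ term. The condition $T_r\ge \frac{\log 8}{\l_{\min}}$ is not used in this proposition per se but is included in \equ{Tr} for later use (e.g.\ in applying Lemma~\ref{covering}).
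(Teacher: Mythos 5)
Your proposal is correct and follows essentially the same route as the paper: pass to the complementary set, apply Markov/Chebyshev to the Birkhoff integral, use Fubini, split the time integral at $T_r$, bound the initial segment trivially, and invoke Proposition \ref{exponential mixing} on $[T_r,T]$ together with the three conditions in \equ{Tr} exactly as you describe (including the absorption of $e^{-\l' T_r}/\l'$ into $\nu(V_r)$ via \equ{Vr} and \equ{lb1}). The only cosmetic difference is that the paper phrases the Chebyshev step as a bound on $\nu\big(A_{x,\vre}(T,O^c)\big)$ rather than on your set $B$, which amounts to the same thing.
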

\begin{proof}
Let {$x,\vre,O$ and $T$ be as above.}
Then
by definition we have:
\eq{intermediate}
{\begin{aligned}
\nu \big(A_{x,\, 1-\vre}(T,O)             \big)
& =  \nu \left(\left\{h \in V_r: \frac{1}{T} \int_{0}^{T} 1_{{O}}(g_thx)\,dt \ge 1- \vre   \right \}             \right)     \\
&= \nu \left( V_r \right)-\nu \left(\left\{h \in V_r: \frac{1}{T} \int_{0}^{T} 1_{{O^c}}(g_thx)\,dt \,{>}\, \vre   \right \}             \right) \\
& {\ge  \nu \left( V_r \right)-\nu \big( A_{x, 
  {\vre} }(T,
  O^c)             \big)}.
\end{aligned}}
Our goal is to estimate the right-hand side of 
{\equ{intermediate} from below}. We have: 
{
\begin{equation}\label{e1}
\begin{aligned}
 \nu \big(A_{x,\vre
 }(T,
  O^c)             \big) = \  & \nu \left(\left\{h \in V_r: \frac{1}{T} \int_{0}^{T} 1_{{O^c}}(g_thx)\,dt \ge \vre   \right \}             \right) \\
  \underset{\text{(Markov's inequality) }}{\le} &
 {\frac1{\vre T}} \int_{0}^{T} \int_{ V_r} 1_{O^c}(g_thx) \,d \nu (h)\,dt \\
  =
& {\frac1{\vre T}} \int_{0}^{T} \nu \left(\left \{h \in V_r : g_thx \in O^c \right \}  \right) 
dt  \\
   =
 &{\frac1{\vre T}} \left( \int_{0}^{T_r} \nu \left(\left \{h \in V_r : g_thx \in O^c \right \}  \right) dt   \right.   \\ 
 &\qquad\qquad
 + \left.\int_{T_r}^{T } \nu  \left(\left \{h \in V_r : g_thx \in O^c \right \}  \right) dt   \right) .
 \end{aligned}
 \end{equation}}
 Note that 
{\eq{1inneq} 
{ 
 { \int_{0}^{T_r} \nu \left(\left \{h \in V_r : g_thx \in O^c \right \}  \right) dt  
  \le 
  T_r \cdot \nu\left(V_r\right),}      
   }}
 {and, since} 
$T_r \ge t_1$,
by applying Proposition \ref{exponential mixing} we get
\eq{2inneq}{ 
\begin{aligned}
  &\int_{T_r}^{T}  \nu \left(\left \{h \in V_r : g_thx \in O^c \right \}  \right)   \,dt  \\
= &\int_{T_r}^{T} \big(\nu(V) -  \nu \left(\left \{h \in V_r : g_thx \in O \right \}  \right) \big)  \,dt  \\
 \le 
&\int_{T_r}^{T}   \left( \nu \left( {V_r} \right) 
\left(1-\mu ({\sigma _{e^{ -  \lambda '  T_r}}}O)\right) + {e^{ -  \lambda ' t}}\right)   \,dt         \\
 \le 
&\ T\cdot \nu\left(V_r\right)
 \left( 1-\mu ({\sigma _{e^{-\l' T_r}}}O)\right) + \frac{e^{-\l' T_r}}{ \lambda '  
} 
\\
 \underset{\equ{Tr}}{\le}&\ { 
T\cdot \nu\left(V_r\right) 
\left(  1-\mu ({\sigma _{r/2}}O) \right) +  \frac{e^{-\l' T_r}}{ \lambda ' } 
}.
\end{aligned}}
{It remains to observe that $\equ{Tr}$, in combination with \equ{Vr} and \equ{lb1}, also implies that  $\frac{e^{-\l' T_r}}{ \lambda ' } \le \nu(V_r)$. Hence} \equ{main} follows from 
\equ{intermediate}, 
\eqref{e1}, \equ{1inneq} and \equ{2inneq}. 
\end{proof}

\section{A covering result}\label{cover}
In this section we will 
prove a covering result for the sets of the form $A_{x, \delta}(NT,O^c)$. Then, by using Proposition \ref{lsup} and Lemma \ref{Auxillary}, in the next section we will 
obtain an upper estimate for the Hausdorff dimension of ${S_{x, \delta }(O)}$.  
\smallskip

We start with the following lemma:
\begin{lem}
\label{covering of A^P} Let $O$ be an open subset of $X$, 
and let $0<\vre<1$.
    {Then for any  {${T>0}$}, $\gamma \in \Lambda_r$, $x \in X$, {any $0< \alpha< \vre$} and for any Bowen $(r,T)$-box $B={g_{ - T}}{V_r\gamma}{g_T}  $ which has non-empty intersection {with} the set {$A_{x, 1-\alpha}(T, \sigma_{4r} O)$}, we have {$$B  \cap  \, A_{x, \vre}(T,O^c )= \varnothing.$$}}
\end{lem}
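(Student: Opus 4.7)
The plan is to show that any two points $h_1, h_2$ of a single Bowen $(r,T)$-box $B$ have trajectories $(g_t h_i x)_{t \in [0,T]}$ that remain so close in $X$ that membership of $g_t h_1 x$ in the deep core $\sigma_{4r} O$ forces $g_t h_2 x$ into $O$ itself. Once this geometric closeness is established, the integral conditions defining $A_{x,1-\alpha}(T,\sigma_{4r}O)$ and $A_{x,\vre}(T,O^c)$ become mutually exclusive for points of $B$, since $\alpha<\vre$ leaves no room for both.

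First I would parametrize: writing $h_i = g_{-T}(v_i \gamma)g_T$ with $v_i \in V_r$, a direct computation using right-invariance of $\dist$ yields $h_1 h_2^{-1} = g_{-T}(v_1 v_2^{-1}) g_T$ and hence
$$\dist(g_t h_1, g_t h_2) = \dist\bigl(g_{t-T}(v_1 v_2^{-1})g_{T-t}, e\bigr).$$
By \equ{Bowen inc} we have $v_1, v_2 \in B^P(r/4)$, and right-invariance together with the triangle inequality gives $\dist(v_1 v_2^{-1}, e) = \dist(v_1, v_2) \le r/2$.

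Next, for $t \in [0,T]$ set $s := t - T \le 0$. Since $P \subset H$ is contained in the unstable horospherical subgroup, $\Ad(g_s)|_{\mathfrak{p}}$ has all eigenvalues of modulus $\le 1$ for $s \le 0$, so it is a contraction in a suitable norm on $\mathfrak{p}$. Combined with the bi-Lipschitz property of $\exp$ on $B^{\mathfrak{p}}(r')$ (see \equ{bilip}) and the choice $r \le r_2 \le r'/2$, this produces a uniform estimate
$$\dist\bigl(g_s(v_1 v_2^{-1})g_{-s}, e\bigr) \le 2r \qquad \text{for all } s \le 0.$$
Since $r_2 \le r_0/2$, the quotient map $\pi_x$ is a local isometry on the relevant balls, so $\dist(g_t h_1 x, g_t h_2 x) \le 2r$ for every $t \in [0,T]$.

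The conclusion is then a routine set-theoretic inclusion: if $g_t h_1 x \in \sigma_{4r} O$ then $\dist(g_t h_1 x, O^c) > 4r$, hence $\dist(g_t h_2 x, O^c) > 2r > 0$, so $g_t h_2 x \in O$. Integrating this pointwise containment gives
$$\frac{1}{T}\int_0^T 1_{O^c}(g_t h_2 x)\,dt \le 1 - \frac{1}{T}\int_0^T 1_{\sigma_{4r}O}(g_t h_1 x)\,dt \le \alpha < \vre,$$
so $h_2 \notin A_{x,\vre}(T,O^c)$ for every $h_2 \in B$, which establishes $B \cap A_{x,\vre}(T,O^c) = \varnothing$. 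I expect the main technical point to be the uniform contraction estimate in the third step: one must verify that the multiplicative constants arising from the exponential bi-Lipschitz factor and from comparing the Riemannian metric on $P$ with a norm adapted to $\Ad(g_s)|_{\mathfrak{p}}$ are fully absorbed into the $4r$ safety margin built into the definition of $\sigma_{4r} O$ and into the choice of $r_2$ made in \equ{rlimit}.
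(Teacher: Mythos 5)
Your proposal is correct and follows essentially the same route as the paper: both arguments show that for any two points of a single Bowen $(r,T)$-box the trajectories $g_t(\cdot)x$, $t\in[0,T]$, stay within distance $O(r)$ of each other (using that conjugation by $g_{-(T-t)}$ contracts $P\subset H$, as quantified in \equ{diam}), so that $g_th_1x\in\sigma_{4r}O$ forces $g_th_2x\in O$, after which the two integral conditions are incompatible since $\alpha<\vre$. The only cosmetic differences are that you bound $\dist(g_th_1,g_th_2)$ via the single element $v_1v_2^{-1}$ rather than via a product of two contracted balls as in the paper, and that your appeal to the injectivity radius is unnecessary (the projection $\pi_x$ is $1$-Lipschitz in any case).
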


\begin{proof}
Let $O$ be an open subset of $X$ and let 
$\gamma \in \Lambda_r$.  Consider the Bowen $(r,T)$-box $B={g_{ - T}}{V_r\gamma}{g_T}$.
Let $x \in X$,
take $p \in B$, and assume that {$g_tpx \in \sigma_{4r}O$} for some $0 \le t \le T$. {Any $p' \in B$ is of the form $p' = hp$ where $h\in {g_{ - T}}({V_r \cdot V_r}){g_T}$. Thus we have
$$
\begin{aligned}
g_tp'x 
= g_thg_{-t} g_tpx & \in g_{ - (T-t)}({V_r \cdot V_r})g_{(T-t)}g_tpx \\
& {\underset{\equ{diam}}{\in} B^G (2r e^{-{{\lambda_{\min}}}(T-t)} ) \cdot B^G (2r e^{-{{\lambda_{\min}}}(T-t)} )g_tpx} \\
& {\,\, \in B^G(4r) g_tpx}
\end{aligned}
$$
which, in view of \equ{Bowen inc}, implies that $\dist(g_tp'x , g_tpx ) \le {4r}$. Hence  $g_tp'x \in O$. Now assume in addition that  {$0<\alpha < \vre< 1$ and $p\in A_{x, 1-\alpha}(T, \sigma_{4r} O)$}; then}
 $$
 \begin{aligned}
  \frac{1}{T}\int_0^T 1_{ O^c}(g_tp'x)\,dt & =1- \frac{1}{T}\int_0^T 1_{ O}(g_tp'x)\,dt 
\\
 & \le 1- \frac{1}{T}\int_0^T {1_{\sigma_{4r}O}(g_tpx)}\,dt  
\le 1-(1-\alpha)=\alpha <  \vre.
 \end{aligned}$$
Therefore, {if $B $ has non-empty intersection with 
$A_{x, 1-\alpha}(T, \sigma_{4r} O) $}, then for any $p' \in B$ we have {$p' \notin  
A_{x, \vre}(T,O^c )$}. 
This ends the proof.  
\end{proof}

Now, by combining the previous lemma with Lemma  \ref{covering} and {{Proposition} \ref{exponential mixing1}} we obtain the following corollary which is a covering result for the sets of type $A_{x, \vre}(T,O^c)$:  
\begin{cor}\label{sub count}
Let 
$T_r$ be as in \equ{Tr}. Then for any $0<\vre<1$, any {$T>T_r$}, any $x \in X$, and for any open subset $O$ of $X$, the set $A_{x, \vre}(T,O^c)$ can be covered with at most
$ {\frac{e^{\eta T}}\vre C(T,O)}$     
Bowen $(T,r)$-{boxes} in $P$, where
\eq{ctoe}{{C(T,O):=   
 1-{\mu ({\sigma _{5r}}O)} +\frac{T_r+1}{T} 
 +c_0 e^{-{{\lambda_{\min}}} T}  
 .        }}
\end{cor}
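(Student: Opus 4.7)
The plan is to count the Bowen $(T,r)$-boxes coming from the natural tessellation of $P$ that meet $A_{x,\vre}(T,O^c)$. First I would observe that since conjugation by $g_{-T}$ is a Lie group automorphism of $P$, one can rewrite $B_\gamma := g_{-T}V_r\gamma g_T = (g_{-T}V_r g_T)(g_{-T}\gamma g_T)$, so the family $\{B_\gamma\}_{\gamma \in \Lambda_r}$ is a tessellation of $P$ by Bowen $(T,r)$-boxes, and moreover $\nu(B_\gamma) = e^{-\eta T}\nu(V_r)$ by \equ{delta}. Writing
$$\mathcal{N} := \{\gamma \in \Lambda_r : B_\gamma \cap V_r \ne \varnothing\}, \qquad \mathcal{N}_\vre := \{\gamma \in \Lambda_r : B_\gamma \cap A_{x,\vre}(T,O^c) \ne \varnothing\},$$
one has $\mathcal{N}_\vre \subset \mathcal{N}$ (since $A_{x,\vre}(T,O^c) \subset V_r$), and the task reduces to an upper bound on $|\mathcal{N}_\vre|$.

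Next, for any auxiliary parameter $\alpha \in (0,\vre)$, the contrapositive of Lemma \ref{covering of A^P} says that every $\gamma \in \mathcal{N}_\vre$ satisfies $B_\gamma \cap A_{x, 1-\alpha}(T,\sigma_{4r}O) = \varnothing$. Combined with the tessellation property and the inclusion $A_{x, 1-\alpha}(T,\sigma_{4r}O) \subset V_r$, this forces
$$A_{x, 1-\alpha}(T,\sigma_{4r}O) \subset \bigsqcup_{\gamma \in \mathcal{N}\setminus\mathcal{N}_\vre} B_\gamma.$$
Taking $\nu$-measures and using the exact value $\nu(B_\gamma)=e^{-\eta T}\nu(V_r)$ gives
$$|\mathcal{N}_\vre| \le |\mathcal{N}| - \frac{e^{\eta T}\,\nu\bigl(A_{x,1-\alpha}(T,\sigma_{4r}O)\bigr)}{\nu(V_r)}.$$

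To close the argument I would insert the two a priori estimates already available. Lemma \ref{covering} provides $|\mathcal{N}| \le e^{\eta T}(1 + c_0 e^{-\lambda_{\min}T})$, while Proposition \ref{exponential mixing1}, applied to the open set $\sigma_{4r}O$ in place of $O$, yields a lower bound for $\nu(A_{x,1-\alpha}(T,\sigma_{4r}O))/\nu(V_r)$ in terms of $\mu(\sigma_{r/2}\sigma_{4r}O)$, which I would then replace by $\mu(\sigma_{5r}O)$ using the elementary triangle-inequality inclusion $\sigma_{5r}O \subset \sigma_{r/2}\sigma_{4r}O$. Letting $\alpha \to \vre^-$ and using $\vre \le 1$ to absorb the resulting factor of $\vre$ in front of $c_0 e^{-\lambda_{\min}T}$ produces exactly the claimed bound $(e^{\eta T}/\vre)\,C(T,O)$ with $C(T,O)$ as in \equ{ctoe}. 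The only points requiring genuine verification are the tessellation assertion (immediate from the rewriting above) and the inner-core inclusion; everything else is bookkeeping that chains the two cited inputs together.
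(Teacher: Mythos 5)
Your argument is correct and follows essentially the same route as the paper's proof: the same auxiliary parameter $\alpha\in(0,\vre)$, the contrapositive of Lemma \ref{covering of A^P} to separate the boxes meeting $A_{x,\vre}(T,O^c)$ from those meeting $A_{x,1-\alpha}(T,\sigma_{4r}O)$, Lemma \ref{covering} for the total count, Proposition \ref{exponential mixing1} (applied to $\sigma_{4r}O$) together with the inclusion $\sigma_{5r}O\subset\sigma_{r/2}\sigma_{4r}O$ for the subtracted term, and the limit $\alpha\to\vre^-$. The only cosmetic difference is that you lower-bound the number of boxes \emph{meeting} $A_{x,1-\alpha}(T,\sigma_{4r}O)$ via the tessellation, whereas the paper phrases the subtracted quantity in terms of boxes \emph{contained in} that set; your formulation of that step is if anything slightly cleaner.
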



 \begin{proof}
Let $0<\vre<1$, {$0< \alpha < \vre$}, {$T>T_r$}, $x \in X$, and let $O$ be an open subset of $X$. Then we have:
{
 \begin{align*}
     &\ \#\{\gamma \in \Lambda_r : g_{-T}V_r \gamma g_{T} \, \cap \,  A_{x, \vre}(T,O^c)  \ne \varnothing\} \\
\underset{\text{Lemma } \ref{covering of A^P}}\le  & \  \#\{\gamma \in \Lambda_r : g_{-T}V_r \gamma g_{T} \, \cap \,  V_r  \ne \varnothing\} -  \#\{\gamma \in \Lambda_r : g_{-T}V_r \gamma g_{T} \, \subset \, A_{x, 1-\alpha}(T, \sigma_{4r} O )  \} \\ 
 \underset{\text{Lemma } \ref{covering}}{\le}  &\ e^{\eta T}  \left(1 + c_0e^{-{{\lambda_{\min}}} T}\right) -    \#\{\gamma \in \Lambda_r : g_{-T}V_r \gamma g_{T} \, \subset \, A_{x, 1- \alpha}(T,\sigma_{4r} O )  \}   
 \\ {\le} & \  e^{\eta T}  \left(1 + c_0e^{-{{\lambda_{\min}}} T}\right) -\frac{\nu\left(A_{x, 1-\alpha}(T,\sigma_{4r} O )\right)}{\nu({g_{ -T}}V_r {g_T})} \\
 \underset{\text{Proposition }\ref{exponential mixing1}}{\le}&\ e^{\eta T}  \left(1 + c_0e^{-{{\lambda_{\min}}} T}\right)  -e^{\eta t} \left( 1-\alpha^{-1} \cdot \left( 1-\mu (\sigma_{r/2}{\sigma _{4r}}O) + {\frac{T_r+1}{T}}  \right)  \right)                      \\
 \underset{\equ{ctoe}}{<} & \ 
  {\frac{e^{\eta T}}\alpha C(T,O)}.  \end{align*} 
Now since $0< \alpha < \vre$ was arbitrary, by letting $\alpha$ approach $\vre$, we get that $A_{x ,\vre}(T, O^c)$ can be covered with at most $  {\frac{e^{\eta T}}\vre C(T,O)}$     
Bowen $(T,r)$-{boxes} in $P$, as desired. }  
 \end{proof}

Next we will need a generalized version of the definition \equ{A} of sets $A_{x, \delta}(T,S)$. 
Namely, given $S \subset X$, $x \in X$,   $T>0$, $0<\delta < 1$  and $J \subset \N$, let us define
\eq{atsj}{A_{x,\, \delta}
 (T,S,J):=\left\{h \in V_r: \frac{1}{T} \int_{(i-1)T}^{iT} 1_{S}(g_thx)\,dt \ge \delta  \,\,\,\,\, \forall\, i \in J \right \}.}
Clearly $A_{x, \delta}(T,S) = A_{x,\, \delta}
 (T,S,\{1\})$.
 Using the above corollary inductively, in the following proposition we obtain a covering result for the sets of the form 
 \equ{atsj}.

\begin{prop}\label{delta}
Let $O$ be a non-empty open subset of $X$, 
and let $T_r$ be as in \equ{Tr}. Then for all $0< \vre < 1$, {$T>T_r$}, $N \in \Z_+$, {$J \subset \{1,\dots,N\}$}, and for all $x\in X$, 
the set $A_{x, \vre}(T, O^c, 
J)$ can be covered with at most 
\eq{uppcov}
{e^{\eta NT} { { {\left(\frac{C(T,O)}\vre\right)^{|J|}}} \left(1 + c_0e^{-{{\lambda_{\min}}} t}\right)^{N- |J|}}}
Bowen $(NT,r)$-{boxes} in $P$.
\end{prop}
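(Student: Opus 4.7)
The plan is to proceed by induction on $N$, successively refining a cover by Bowen $(iT,r)$-boxes into a cover by Bowen $((i+1)T,r)$-boxes, and applying one of two different estimates at each step depending on whether $i+1 \in J$ or not. The base case $N = 0$ is trivial: the single ``Bowen $(0,r)$-box'' $V_r$ covers itself.

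For the inductive step, suppose we have a cover $\mathcal{C}_i$ of $A_{x,\vre}(T, O^c, J \cap \{1,\dots,i\})$ by Bowen $(iT,r)$-boxes. Any such box has the form $B = g_{-iT} V_r \gamma\, g_{iT}$ for some $\gamma$, and is a ``conjugated copy'' of $V_r$: the sub-boxes of the form $g_{-(i+1)T} V_r \gamma'\, g_{(i+1)T} \subseteq B$ correspond bijectively, via conjugation by $g_{iT}$, to the Bowen $(T,r)$-sub-boxes of $V_r$. For $h = g_{-iT} v\gamma\, g_{iT} \in B$ the substitution $s = t - iT$ gives
\[
\frac{1}{T}\int_{iT}^{(i+1)T} 1_{O^c}(g_t h x)\,dt = \frac{1}{T}\int_0^T 1_{O^c}(g_s v\, y)\,ds, \qquad y := \gamma g_{iT} x,
\]
so $h \in B \cap A_{x,\vre}(T,O^c,\{i+1\})$ iff $v \in A_{y,\vre}(T,O^c)$ with the same parameters. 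Thus covering $B \cap A_{x,\vre}(T,O^c,\{i+1\})$ by Bowen $((i+1)T, r)$-sub-boxes reduces, via this conjugation, to the case handled by Corollary \ref{sub count} applied to the base point $y$.

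Consequently, when $i+1 \in J$, each box $B\in\mathcal{C}_i$ is refined into at most $\frac{e^{\eta T}}{\vre}\, C(T,O)$ Bowen $((i+1)T,r)$-boxes (those sub-boxes of $B$ that meet $A_{x,\vre}(T,O^c,\{i+1\})$), by Corollary \ref{sub count}. When $i+1 \notin J$, we simply replace $B$ by all Bowen $((i+1)T,r)$-sub-boxes it contains; by Lemma \ref{covering} (applied after conjugation by $g_{iT}$) there are at most $e^{\eta T}\bigl(1 + c_0 e^{-\lambda_{\min} T}\bigr)$ of them. Composing the two types of refinement factors across $N$ steps yields exactly the bound \equ{uppcov}.

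The most delicate point is making sure that the two sources of bookkeeping --- (i) the conjugation $B \mapsto g_{iT} B g_{-iT}$ that maps a Bowen $(iT,r)$-box back to $V_r$, and (ii) the simultaneous shift of base point from $x$ to $y = \gamma g_{iT} x$ in the $A_{\cdot,\vre}(T,O^c)$ condition --- match up so that Corollary \ref{sub count} really does apply uniformly over all choices of $B$ and all $i$. Everything else is a routine multiplicative induction.
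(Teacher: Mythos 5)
Your proposal is correct and follows essentially the same route as the paper: an induction on $N$ that refines Bowen boxes one time-block at a time, using Corollary \ref{sub count} (after conjugating a Bowen $(iT,r)$-box back to $V_r$ and shifting the base point to $\gamma g_{iT}x$) when the new index lies in $J$, and Lemma \ref{covering} otherwise. The only cosmetic difference is that you run the induction forward over $i=0,\dots,N-1$ while the paper peels off the last index $N$ via $J'=J\smallsetminus\{N\}$; the conjugation/base-point bookkeeping you flag as the delicate point is exactly the computation carried out in the paper's equation \equ{induction2}.
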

\begin{proof} Let $0< \vre <1$, let {$T> T_r$}, and let $x \in X$. {We argue by induction on $N$; the basis is given by $N=0$ and $J = \varnothing$, which makes the quantity \equ{uppcov} equal to $1$. This makes sense, since $A_{x, \vre}(T, O^c, 
J) = V_r$, which is precisely a $(0,r)$-box.} 

{Now take an arbitrary $N\in\N$ and let $J' := J \smallsetminus \{N\}$. By the induction assumption, 
the set 
$A_{x, \vre}\big((N-1)T,O^c,J'\big)$ can be covered with at most
\eq{uppcovprime}
{e^{\eta (N-1) T}\cdot {\left(\frac{C(T,O)}\vre\right)^{|J'|}} \left(1 + c_0e^{-{{\lambda_{\min}}} t}\right)^{N-1- |J'|}}
Bowen $\big((N-1)T,r\big)$-boxes in $P$.
Now let $g_{-(N-1)T}V_r \gamma g_{(N-1)T}$ be one of the Bowen $\big((N-1)T,r\big)$-boxes in the above cover which has non-empty intersection with $A_{x, \vre}\big((N-1)T,O^c,J'\big)$. 
Take any $q={g_{ - (N-1)T}}{h} \gamma{g_{(N-1)T}} \in g_{-(N-1)T}V_r \gamma g_{(N-1)T}$, and consider two cases.
\begin{itemize}
\item If $N\in J$, so that $|J'| = |J|-1$ and $N-1- |J'| = N- |J|$, write
$$\begin{aligned}
 \frac{1}{T} \int_{(N-1)T}^{NT} 1_{O^c}(g_tqx)\,dt& = \frac{1}{T} \int_{(N-1)T}^{NT}  1_{O^c}\big(g_t({g_{ - (N-1)T}}{h} \gamma{g_{(N-1)T}} )x \big)\,dt\\ &=\frac{1}{T} \int_0^{T} 1_{O^c}\big(g_th (\gamma g_{(N-1)T} x )\big)\,dt.
\end{aligned}
$$
Consequently, 
\eq{induction2}{
\begin{aligned}
&  \left \{q \in g_{-(N-1)T}V_r \gamma g_{(N-1)T}:\frac{1}{T} \int_{(N-1)T}^{NT} 1_{O^c}(g_tqx)\,dt \ge \vre\right \} \\
 & = 
  g_{(N-1)T} \left \{ h \in V_r: \frac{1}{T} \int_0^T 1_{O^c}\big(g_th (\gamma g_{(N-1)T} x )\big)\,dt \ge \vre  \right \} \gamma g_{(N-1)T} \\
  & =  g_{(N-1)T} A_{\gamma g_{(N-1)T} x,\vre} (T,r,O^c) \gamma g_{(N-1)T}.
  \end{aligned}}
Hence, 
by 
applying Corollary \ref{sub count} with
$\gamma g_{(N-1)T} x$ in place of $x$, we can cover the set 
in the left hand side of  \equ{induction2} with at most $e^{\eta T}  {\left(\frac{C(T,O)}\vre\right)^{|J|}}$ Bowen $(NT,r)$-boxes in $P$. Therefore the number of  Bowen $(NT,r)$-boxes needed to cover $A_{x, \vre}(T, O^c, 
J)$ is at most $e^{\eta T}  {\left(\frac{C(T,O)}\vre\right)^{|J|}}$ times the quantity in \equ{uppcov}, which is precisely \equ{uppcovprime}.
\item If $N\notin J$, so that $|J'| = |J|$ and $N-1- |J'| = N- 1-|J|$, the argument is even clearer. By \text{Lemma } \ref{covering}, $g_{-(N-1)T}V_r \gamma g_{(N-1)T}$ can be covered by at most $e^{\eta T}  \left(1 + c_0e^{-{{\lambda_{\min}}} t}\right)$ Bowen $(NT,r)$-boxes.
Hence the number of  Bowen $(NT,r)$-boxes needed to cover $A_{x, \vre}(T, O^c, 
J)$ is at most $e^{\eta T}  \left(1 + c_0e^{-{{\lambda_{\min}}} t}\right)$ times the quantity in \equ{uppcov}, which again is precisely \equ{uppcovprime}.   
\end{itemize}} \end{proof}
 \ignore{Moreover, {using \equ{eigen value} and the Besicovitch covering property of $P$,
it is easy to see that for some ${{K_4}}>0$ only dependent on $L$, any Bowen $(tk,r)$-box in $P$ can be covered with at most  ${{K_4}} \frac{{\nu ({g_{ - tk}}V_r {g_{tk}})}}{{\nu \left({B^P}(re^{ - k {\lambda_{\max}}t})\right)}}$
 balls in $P$ of radius $re^{-k {\lambda_{\max}}t}$.}}

\ignore{ \smallskip
By using the above proposition repeatedly and by going through an inductive procedure, in the following proposition we obtain a covering result for the sets of type $A_{x, \vre}(T,O^c,J)$ where $J$ is an arbitrary non-empty subset of $\{1, \dots, N\}$.    
 \begin{prop}\label{delta 1}
{Let $O$ be a non-empty open subset of $X$, 
and let $T_r$ be as in \equ{Tr}.} Then for any $x\in X$, {$N \in \N$, {$T>T_r$},
$0< \vre < 1$ 
and for any $J \subset \{1,\dots,N \}$ with $\|J\| \ge s N$,
 the set $A_{x, \vre}(T,O^c,J)$ can be covered with at most \eq{uppcov}
{\begin{aligned}
& e^{\eta NT} \cdot {C(T,O,\vre)^{s N} \cdot \left(1 + c_0 e^{-{{\lambda_{\min}}} T}\right)^{sN+1} } 
\end{aligned}}
Bowen $(NT,r)$-balls in $P$
}
\end{prop}
\begin{proof}
Let $x \in X$, $0<  \vre <1$, $0<s<1$, {$T>T_r$}, $N \in \N$, and let $J \subset \{1, \cdots, N \}$ with $|J| \ge s N$.
Our goal is to cover the set $A_{x, \vre}(T,O^c,J)$ with Bowen $(NT,r)$-balls in $P$. Take a subset $J' \subset J$ such that $|J'|=sN$. Since $A_{x, \vre}(T,O^c,J) \subset A_{x, \vre}(T,r,J', O^c)$, it suffices to find an upper bound for the number of Bowen $(NT,r)$-balls in $P$ needed to cover the set $A_{x, \vre}(T,r,J', O^c)$.
Note that we can decompose $I:=\{1,\dots,N\}  \smallsetminus J'$  and $J'$ into sub-intervals of maximal size $I_{1}, \dots,I_{p_1}$ and $J_{1}, \dots,J_{p_2}$ so that:
$I:=\bigsqcup_{i=1}^{p_1} I_{i} $ and $J':=\bigsqcup_{j=1}^{p_2} J_{j}.$ Hence, we get a partition of the set $\{1,\dots,N\}$ as follows:
$$\{1,\dots,N \}=  \bigsqcup_{i=1}^{p_1} I_{i} \sqcup   \bigsqcup_{j=1}^{p_2} J_{j}                  .$$ 
We inductively prove the following 

\begin{claim} For any integer $M \le N$, if
\eq{induc eq}{\{1,\dots,M\}= \bigsqcup_{i=1}^{m_1} I_{i}
\sqcup \bigsqcup_{j=1}^{m_2} J_{j},}
then the set $A_{x, \vre}(T,r,J', O^c)$ can be covered with at most: 
\eq{L case}{e^{\eta M T } \cdot   {C(T,O,\vre)^{\sum_{j=1}^{m_2} |J_j|}\left(1 + c_0 e^{-{{\lambda_{\min}}} T}\right)^{n_M+1}},} 
Bowen $(MT,r)$-balls in $P$, where
$$ n_M:=\# \{s \in \{1,\dots, M-1\}:\ s \in J' \,\, and\,\, s+1 \in I\}.   $$
\end{claim}

\begin{proof} In the first step, if $\{1,\dots,M\}=I_{1}$, we have $\sum_{j=1}^{m_2} |J_j|=0$ and $n_M=0.$ 
Since $A_{x, \vre}(T,r,J', O^c) \subset V_r$, by using Lemma \ref{covering} applied with $t$ replaced with $MT$, the set $A_{x, \vre}(T,r,J', O^c)$ can be covered with at most 
\begin{align*}
&  e^{\eta M T} \left(1 + c_0 e^{-{{\lambda_{\min}}} MT}\right) \\
& \le e^{\eta M T }\left(1 + c_0 e^{-{{\lambda_{\min}}} T}\right) \\
 & =   e^{\eta MT }  \left(1 + c_0 e^{-{{\lambda_{\min}}} T}\right)^{n_M+1}.
 \end{align*}
Bowen $(MT,r)$-balls in $P$. This proves the claim in this case.

 Also if $\{1,\dots,M\}=J_{1}$, we have $     \sum_{i=1}^{m_2} |J_j|=M$ and $n_M=0$. Moreover, it is easy to see that we have $A_{x, \vre} \left(T,O^c,J) \subset  A_{x, \vre}(T,r,\{1,\dots,M \},O^c \right) $.  Hence, the claim in this case follows from Proposition \ref{delta} applied with $k$ replaced with $M$.\\
In the inductive step, let $M'>M$ be the next integer for which an equation similar to \equ{induc eq} is satisfied. We have two cases. Either
$$\{1,\dots, M'\}=\{1,\dots,M\} \sqcup I_{m_1}$$
or
$$\{1,\dots, M'\}=\{1,\dots,M\} \sqcup J_{m_2}.$$
We start with the first case. Note that since for any $\gamma_1 \in P$
$$
\begin{aligned}
& \left \{ \gamma  \in \Lambda_r :{g_{ - M'T}}{V_r\gamma}{g_{M'T}} \, \cap \, g_{-MT}{V_r}\gamma_1 g_{MT} \ne \varnothing \right \}\\
& =\left \{ \gamma  \in \Lambda_r :{g_{ - (M'-M)T}}{V_r\gamma}{g_{(M'-M)T}}  \cap {V_r}\gamma_1 \ne \varnothing \right \},
\end{aligned}
$$
in view of Lemma \ref{covering} applied with $t$ replaced with $(M'-M)T$, it is easy to see that every Bowen $(MT,r)$-ball in $P$ can be covered with at most 
$$e^{\eta (M'-M)T}  \left(1 + c_0 e^{-{{\lambda_{\min}}} (M'-M) T}\right) \le e^{\eta (M'-M)T} \cdot \left(1 + c_0 e^{-{{\lambda_{\min}}} T}\right)$$
Bowen $(M'T,r)$-balls in $P$. Therefore, by using the induction hypothesis and in view of \equ{L case}, we can cover $Z(J)$ with at most 
\eq{ind ine}
{\begin{split}
& e^{\eta (M'-M)T} \cdot \left(1 + c_0 e^{-{{\lambda_{\min}}} T}\right) \\
& \cdot e^{\eta MT} \cdot   {C(T,O,\vre)^{\sum_{j=1}^{m_2} |J_j|}} \left(1 + c_0 e^{-{{\lambda_{\min}}} T}\right)^{n_M+1} \\
& =e^{\eta M'T} \cdot {C(r,t,\vre)^{\sum_{j=1}^{m_2} |J_j|}} \cdot  \left(1 + c_0 e^{-{{\lambda_{\min}}} T}\right)^{n_M+2} \\
& \underset{n_{M'}=n_M+1}{=}  e^{\eta M'T}  \cdot  {C(r,t,\vre)^{\sum_{j=1}^{m_2} |J_j|}} \cdot \left(1 + c_0 e^{-{{\lambda_{\min}}} T}\right)^{n_{M'}+1}
\end{split}
}
Bowen $(M'T,r)$-ball in $P$. This proves the claim in the first case. \bigskip \\
Now consider the second case.  Let $B={g_{ - MT}}{V_r\gamma}{g_{MT}}  $ be a Bowen $(MT,r)$-ball that has non-empty intersection with the set $A_{x, \vre}(T,r,J', O^c).$  
Then, for any $p \in B$ and any positive integer $i $ we have:
\eq{induc main}{
\begin{split}
& \frac{1}{ T}\int_{(M+(i-1))T}^{(M+i)T} 1_{O^c}(g_tpx)\,dt \\ & = \frac{1}{T}\int_{(M+(i-1))T}^{(M+i)T} 1_{O^c}(g_{t-MT} g_{MT}p \gamma^{-1}g_{-MT}g_{MT} \gamma x)\,dt \\
&= \frac{1}{T}\int_{(i-1)T}^{iT} 1_{O^c}(g_{t} g_{MT}p \gamma^{-1}g_{-MT}g_{MT} \gamma x)\,dt
\end{split}}
Note that the map $p \rightarrow g_{MT} p{\gamma}^{-1} g_{-MT}$ maps $B$ onto $V_r$.
Thus, in view of \equ{induc main}, $p \in B \cap A_{g_{MT} \gamma x}(T,r,J', O^c)$ implies 
$$g_{MT}p \gamma^{-1}g_{-MT} \in A_{x, \vre} \left(T,r,\{M+1,\dots,M' \},O^c, \right). $$
 Hence, by Lemma \ref{delta} applied with $N$ replaced with $|J_{m_2+1}|=M'-M$, and $x$ replaced with $g_{MT} \gamma x$, we can cover $B \cap A_{x, \vre}(T,r,J', O^c)$ with at most 
$$
\begin{aligned}
& e^{\eta { |J_{m_2+1}|} T} {C(T,O, \vre)^{ |J_{m_2+1}|}} 
\end{aligned}
$$
Bowen $(M'T,r)$-balls in $P$. This combined with the induction hypothesis imply that the set $A_{x, \vre}(T,r,J', O^c)$ can be covered with at most 
\begin{align*}
&   e^{\eta MT} \cdot   {C(T,O,\vre)^{\sum_{j=1}^{m_2} |J_j|}} \left(1 + c_0 e^{-{{\lambda_{\min}}} T}\right)^{n_M+1}\\
& \cdot e^{\eta { |J_{m_2+1}|} T} {C(T,O, \vre)^{ |J_{m_2+1}|}} \\
& \underset{n_{M'}=n_M}{=}e^{\eta M' T} \cdot {C(T,O,\vre)^{\sum_{j=1}^{m_2+1} |J_j|} \cdot \left(1 + c_0 e^{-{{\lambda_{\min}}} T}\right)^{n_{M'}+1} }
\end{align*}
 Bowen $(M'T,r)$-balls in $P$. This proves the claim in the second case.
 \end{proof}
Now by letting $M=N$, we conclude that $A_{x, \vre}(T,r,J', O^c)$ can be covered with at most 
\eq{last step}{
\begin{aligned}
& e^{\eta N T} \cdot {C(T,O,\vre)^{\sum_{j=1}^{p_2} |J_j|} \cdot \left(1 + c_0 e^{-{{\lambda_{\min}}} T}\right)^{n_{N}+1}}\\
& = e^{\eta NT} \cdot {C(T,O,\vre)^{\sum_{j=1}^{p_2} |J_j|} \cdot \left(1 + c_0 e^{-{{\lambda_{\min}}} T}\right)^{n_{N}+1}}
\end{aligned}}
Bowen $(NT,r)$-balls in $P$.
Note that $\sum_{j=1}^{p_2} |J_j|=|J'| = s N $ and $n_{N} \le |J'|=s N$. So, in view of  \equ{last step}, the set
$A_{x, \vre}(T,r,J',O^c)$ can be covered with at most:
\eq{finN}{\begin{aligned}
& e^{\eta NT} \cdot {C(T,O,\vre)^{s N} \cdot \left(1 + c_0 e^{-{{\lambda_{\min}}} T}\right)^{sN+1} } 
\end{aligned}}
Bowen $(NT,r)$-balls in $P$. Therefore the proof is complete.
\end{proof}}

Recall that  our goal in this section is to find a covering result for the sets of the form $A_{x, \delta}(NT,O^c)$. 
The following lemma 
reduces this task to a covering result for the sets of the form $A_{x, \vre}(T, O^c,J)$ for $0<\vre < \delta$ and $J\subset \{1,\dots,N\}$. 
   
\begin{lem}\label{main lemma} 
For any  $S \subset X$, $N \in \N$, 
$T>0$, $x \in X$, $0<\delta<1$, and for any $0< \vre < \delta$
$$A_{x, \delta}(NT,S) \subset \bigcup_{J\subset \{1,\dots,N\}: |J| \ge \lceil \left(1 -\frac{1-\delta}{1-\vre} \right) N \rceil } A_{x, \vre}(T, S,J)$$
.
\end{lem}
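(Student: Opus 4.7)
The plan is to use a straightforward averaging/pigeonhole argument. The integral over $[0,NT]$ decomposes as a sum of integrals over the intervals $[(i-1)T, iT]$ for $i=1,\dots,N$, so the hypothesis on $h \in A_{x,\delta}(NT, S)$ says the average of $N$ numbers is at least $\delta$, and the set $A_{x,\vre}(T, S, J)$ records which of these $N$ numbers is at least $\vre$. So the task reduces to a finite-dimensional counting inequality.

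Explicitly, I would take $h \in A_{x, \delta}(NT, S)$, set $a_i := \frac{1}{T}\int_{(i-1)T}^{iT} 1_S(g_t h x)\,dt \in [0,1]$, and define
$$J := \left\{ i \in \{1,\dots,N\} : a_i \ge \vre \right\}.$$
By construction $h \in A_{x,\vre}(T, S, J)$, so it remains only to check that $|J| \ge \lceil (1 - \frac{1-\delta}{1-\vre})N \rceil$. Let $k := |J|$. Breaking the relation $\frac{1}{NT}\int_0^{NT} 1_S(g_t h x)\,dt = \frac{1}{N}\sum_{i=1}^N a_i \ge \delta$ according to whether $i \in J$ or not, and using $a_i \le 1$ for $i \in J$ and $a_i < \vre$ for $i \notin J$, gives
$$\delta N \le \sum_{i=1}^N a_i < k + (N-k)\vre = k(1-\vre) + N\vre,$$
hence $k > N \cdot \frac{\delta - \vre}{1 - \vre} = N \bigl(1 - \frac{1-\delta}{1-\vre}\bigr)$.

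Since $k$ is an integer, a case check (depending on whether the right-hand side is itself an integer) shows that this strict inequality upgrades to $k \ge \lceil N(1 - \frac{1-\delta}{1-\vre})\rceil$ in both cases, which is the desired bound. There is no real obstacle here; the only mild subtlety is being careful that the strict inequality $a_i < \vre$ on $J^c$ is genuinely strict (because $J$ is defined by $a_i \ge \vre$), which is what lets the argument give the ceiling bound rather than only a weak floor-type bound.
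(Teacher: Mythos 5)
Your proof is correct and is essentially the paper's argument: the paper works with the complementary set $E=\{j: h\notin A_{x,\vre}(T,S,\{j\})\}$ and the non-strict bound $a_j\le\vre$ on $E$ to get $|E|\le\frac{1-\delta}{1-\vre}N$, which is the same pigeonhole computation as yours. (The only cosmetic difference is your use of the strict inequality $a_i<\vre$ off $J$, which needs the trivial side remark that when $J=\{1,\dots,N\}$ the conclusion holds anyway; the paper's non-strict version avoids even that.)
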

\begin{proof}
Let $N \in \N$, $r>0$, $T>0$, $x \in X$ and  $0< \vre < \delta<1$.
Also let $h \in  A_{x, \delta}(NT,S) $, and define
$$E: = \big\{j \in \{1, \dots, N\}: h \notin A_{x, \vre}(T,S,\{j\}) \big\}       .$$
 Then 
\begin{align*}
 \delta & {\le}        \frac{1}{NT} \int_{0}^{NT} 1_{S}(g_thx)\,dt  \\
& =\frac{1}{N} \sum_{j \in E}\frac{1}{T} \int_{(j-1)T}^{jT} 1_{S}(g_thx)\,dt +\frac{1}{N} \sum_{j\in\{1,\dots,N \}\smallsetminus E} \frac{1}{T}\int_{(j-1)T}^{jT} 1_{S}(g_thx)\,dt \\
& \le \frac{1}{N} \cdot |E| \cdot \vre +\frac{1}{N} \cdot | \{1,\dots,N \}\smallsetminus E|  
 =\frac{1}{N} \left(|E| \cdot \vre +N-|E|\right).
\end{align*}
This implies
$$|E| \le \frac{1-\delta}{1- \vre} N.$$
Note that it follows immediately from the definition of  $E$ that   $h$ is an element of $A_{x, \vre} (T,S,\{1,\dots, N\}\smallsetminus E)$. Hence, in view of the above inequality we conclude that
$$h \in \bigcup_{J\subset \{1,\dots,N\}: |J| \ge \lceil \left(1 -\frac{1-\delta}{1-\vre} \right) N \rceil } A_{x, \vre}(T,S,J),$$ 
finishing the proof of the lemma. 
\end{proof}

\ignore{\begin{proof} Let $B = g_{-t}V_r \gamma g_t$ be a Bowen $(t,r)$-box. 
In view of the Besicovitch covering property of $P$, any covering of $B$ by balls in $P$ of radius $re^{- \lambda_{\max}t}$ has a subcovering of  index uniformly bounded from above by a fixed constant   (the Besicovitch constant of $P$). The union of those balls is contained in the $re^{-\lambda_{\max} t}$-neighborhood of $B$. But since $B$ is a translate of the exponential image of a box in Lie algebra of $P$ whose  smallest side-length is $re^{-\lambda_{\max} t}$, it follows that the measure of  the $re^{-\lambda_{\max} t}$-neighborhood of $B$ is bounded by a uniform constant times $\nu(B)$, and the lemma follows.\end{proof}}
From the above lemma  combined with Proposition \ref{delta} 
we get the following crucial covering result: 
 \begin{cor}\label{delta 2}
 Let $C_1$ be as in Lemma \ref{coveringballs}, $0< \delta <1$, $0<r\le r_2$, {and let $T_r$ be as in \equ{Tr}.}
Then for any $x\in X$, any {$N \in \N$, any {$T>T_r$},
and for any $0< \vre < \delta$  the set $A_{x,\delta}(NT,O^c)$ can be covered with at most \eq{bowen-ball}
{{\begin{aligned}
& C_1 {e^{{{\lambda_{\max}}} LNT}} {N \choose{\lceil \left(1 -\frac{1-\delta}{1-\vre} \right) N} \rceil} \cdot {
{\left(\frac{C(T,O)}\vre\right)^{\lceil \left(1 -\frac{1-\delta}{1-\vre} \right) N\rceil }}
\cdot \left(1 + c_0 e^{-{{\lambda_{\min}}} T}\right)^{{N - \lceil \left(1 -\frac{1-\delta}{1-\vre} \right) N}\rceil  } } \end{aligned}}}
    balls in $P$ of radius $re^{- {{\lambda_{\max}}} NT}$.
}
\end{cor}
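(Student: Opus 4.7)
The plan is to combine the three ingredients that have already been assembled in this section: the decomposition provided by Lemma \ref{main lemma}, the covering bound of Proposition \ref{delta} applied to each piece of that decomposition, and finally Lemma \ref{coveringballs} to convert Bowen $(NT,r)$-boxes into balls of radius $re^{-\lambda_{\max} NT}$.

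First I would apply Lemma \ref{main lemma} with $S = O^c$ to obtain
$$A_{x,\delta}(NT, O^c) \ \subset\ \bigcup_{\substack{J \subset \{1,\dots,N\}\\ |J| \ge k}} A_{x,\vre}(T, O^c, J), \qquad k := \left\lceil\left(1 - \tfrac{1-\delta}{1-\vre}\right) N\right\rceil.$$
An immediate observation from the defining inequality \equ{atsj} is that the sets $A_{x,\vre}(T,O^c,J)$ are antimonotone in $J$: if $J' \subset J$, then $A_{x,\vre}(T,O^c,J) \subset A_{x,\vre}(T,O^c,J')$. Consequently every $A_{x,\vre}(T,O^c,J)$ with $|J| \ge k$ is contained in some $A_{x,\vre}(T,O^c,J')$ with $|J'| = k$, so the above union can be restricted to subsets of cardinality exactly $k$, of which there are ${N \choose k}$.

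Second I would invoke Proposition \ref{delta}, which, applied with $J$ of cardinality $k$, covers each $A_{x,\vre}(T,O^c,J)$ by at most
$$e^{\eta NT}\left(\frac{C(T,O)}{\vre}\right)^{k}\bigl(1 + c_0 e^{-\lambda_{\min}T}\bigr)^{N-k}$$
Bowen $(NT,r)$-boxes in $P$. Multiplying by ${N \choose k}$ gives a cover of the whole set $A_{x,\delta}(NT,O^c)$ by that many times ${N \choose k}$ Bowen $(NT,r)$-boxes.

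Finally I would apply Lemma \ref{coveringballs} to each Bowen $(NT,r)$-box in this cover, obtaining a subcover of it by at most $C_1 e^{(\lambda_{\max}L - \eta)NT}$ balls in $P$ of radius $re^{-\lambda_{\max} NT}$. The factors $e^{\eta NT}$ and $e^{-\eta NT}$ cancel, yielding the desired bound \equ{bowen-ball}. This argument is essentially mechanical assembly of the previous results; the only point requiring care is the antimonotonicity observation that allows one to restrict the union to subsets of size exactly $k$ rather than bounding a sum over all $|J| \ge k$, which otherwise would introduce an unwanted extra combinatorial factor.
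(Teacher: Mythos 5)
Your proposal is correct and follows exactly the route taken in the paper: the inclusion from Lemma \ref{main lemma}, the antimonotonicity of $J\mapsto A_{x,\vre}(T,O^c,J)$ to reduce to subsets of cardinality exactly $\lceil(1-\frac{1-\delta}{1-\vre})N\rceil$, then Proposition \ref{delta} on each piece and Lemma \ref{coveringballs} to pass from Bowen $(NT,r)$-boxes to balls of radius $re^{-\lambda_{\max}NT}$, with the $e^{\eta NT}$ factors cancelling as you note. Nothing further is needed.
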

{
\begin{proof}
Let $x\in X$, $N \in \N$, {$T>T_r$},
and let $0< \vre < \delta$.
By the above lemma we have:
$$A_{x, \delta}(NT,O^c) \subset \bigcup_{J\subset \{1,\dots,N\}: |J| \ge \lceil \left(1 -\frac{1-\delta}{1-\vre} \right) N \rceil } A_{x, \vre}(T, O^c,J).$$
Now note that for any $J\subset \{1,\dots,N\}$, if we take any subset $J'$ of $J$, then it follows immediately that 
$A_{x, \vre}(T, O^c,J) \subset A_{x, \vre}(T, O^c,J')$. Therefore, the above inclusion yields the following inclusion:
$$A_{x, \delta}(NT,O^c) \subset \bigcup_{J\subset \{1,\dots,N\}: |J| = \lceil \left(1 -\frac{1-\delta}{1-\vre} \right) N \rceil } A_{x, \vre}(T, O^c,J).$$
Also, by Lemma 
\ref{coveringballs}, every Bowen $(NT,r)$-{boxes} in $P$ can be covered with at most $C_1 e^{({{\lambda_{\max}}}L -\eta)NT}$ balls in $P$ of radius $re^{- {{\lambda_{\max}}} NT}$. From this, combined with Proposition \ref{delta} and the above inclusion we can conclude the proof.
\end{proof}}

 \section{Proof of Theorem \ref{dimension drop 3} }\label{proof}
\begin{proof}[Proof of Theorem \ref{dimension drop 3}]
Let $O$ be an open subset of $X$, $x \in X$, and let $\delta>0$. In view of countable stability of Hausdorff dimension, it suffices to show that for any $0<r\le r_2$, we have
$$\codim S_{x, \delta }(O) \gg \frac{\mu ({\sigma _{r}}O) \cdot {\phi \left(\mu ({\sigma _{r}}O), \sqrt{1-\delta}\right)}}{ \log \frac{1}{r}}$$
where $S_{x, \delta }(O)$ is as in \equ{S} and $\phi$ is as in \equ{defphi}. 
In order to prove the above statement, it is evident that it suffices to demonstrate that for any $0<r\le r_2/5 $ we have
\eq{finalestimate}{\codim S_{x, \delta }(O) \gg \frac{\mu ({\sigma _{5r}}O) \cdot {\phi \left(\mu ({\sigma _{5r}}O), \sqrt{1-\delta}\right)}}{ \log \frac{1}{5r}}.}

If $\mu ({\sigma _{5r}}O)=0$, then the above statement follows immediately. So in this proof we assume always that $\mu ({\sigma _{5r}}O)>0$. 
We start with the following combinatorial lemma:
\begin{lem}\label{n-choose-k}
Let $m={m}(n)\le n$ be a function of $n$  such that $\lim_{n \rightarrow \infty} \frac{m}{n}=z$ for some fixed constant $0<z<1$. Then
{$$ {n \choose m}=o(1) {B\big(\tfrac mn\big)}^n,$$
where \eq{bx}{B(z):=
 \left(\frac{1}{z} \right)^{z}  \left(\frac{1}{1-z} \right)^{1-z} .}}
\end{lem}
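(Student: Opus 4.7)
The plan is to apply Stirling's formula directly and extract the explicit asymptotic. Writing Stirling's formula in the form $k! = \sqrt{2\pi k}\,(k/e)^k\bigl(1+O(1/k)\bigr)$, one obtains
\eq{stir}{\binom{n}{m} = \frac{n!}{m!\,(n-m)!} = \sqrt{\frac{n}{2\pi m(n-m)}}\cdot \frac{n^n}{m^m(n-m)^{n-m}} \cdot \bigl(1+o(1)\bigr),}
valid because $m\to\infty$ and $n-m\to\infty$ (since $m/n\to z\in(0,1)$).

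The key algebraic observation is that the middle factor is already exactly $B(m/n)^n$. Indeed,
\[
\frac{n^n}{m^m(n-m)^{n-m}} = \left(\frac{n}{m}\right)^{m}\!\left(\frac{n}{n-m}\right)^{n-m} = \left[\left(\frac{n}{m}\right)^{m/n}\!\left(\frac{n}{n-m}\right)^{(n-m)/n}\right]^{n} = B\!\left(\tfrac{m}{n}\right)^{n}
\]
by the definition \equ{bx} of $B$.

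It remains to check that the prefactor $\sqrt{\frac{n}{2\pi m(n-m)}}$ is $o(1)$. Since $m/n\to z\in(0,1)$, we have $m(n-m) \sim z(1-z)n^2$, so this prefactor is of order $\frac{1}{\sqrt{n}}$, which tends to $0$. Combining this with \equ{stir} yields $\binom{n}{m} = o(1)\cdot B(m/n)^n$ as claimed.

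There is no real obstacle here: the statement is essentially the standard ``entropy asymptotics'' for binomial coefficients. The only mild care needed is verifying that the Stirling error terms for $m!$ and $(n-m)!$ remain $1+o(1)$, which is automatic because $z$ is bounded away from both $0$ and $1$.
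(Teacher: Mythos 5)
Your proof is correct and is essentially identical to the paper's: both apply Stirling's formula to $n!$, $m!$, $(n-m)!$, identify $\left(\frac{n}{m}\right)^m\left(\frac{n}{n-m}\right)^{n-m}$ with $B\big(\tfrac mn\big)^n$, and observe that the prefactor $\sqrt{\frac{n}{2\pi m(n-m)}}$ tends to $0$ because $m$ and $n-m$ are both of order $n$. Nothing further is needed.
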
 
\begin{proof}
Note that $\lim_{n \rightarrow \infty} \frac{m}{n}=z<1$  implies that both $m$ and $n-m$ tend to infinity as $n$ goes to infinity; moreover, $\lim_{n \rightarrow \infty} \frac{n-m}{n}=1-z$. Hence, by using Stirling's approximation we have:
\begin{align*}
{n \choose m} = \frac{n!}{m! (n-m)!}& =(1+o(1)) \frac{\sqrt{2 \pi n} (\frac{n}{e})^n}{\sqrt{2 \pi m} (\frac{m}{e})^m + \sqrt{2 \pi (n-m)} (\frac{n-m}{e})^{n-m}} \\
& =(1+o(1)) \sqrt{\frac{n}{2 \pi m (n-m)}} \left(\frac{n}{m}\right)^m \left(\frac{n}{n-m}\right)^{n-m} \\
&= o(1) \left(\frac{n}{m}\right)^m \left(\frac{n}{n-m}\right)^{n-m}\\
& = o(1) 
{B\big(\tfrac mn\big)}^n,   
\end{align*}
where the third equality above follows from the fact that
$$\lim_{n \rightarrow \infty} \frac{n}{2 \pi m(n-m)}=\lim_{n \rightarrow \infty}  \frac{1}{2 \pi n} \cdot \lim_{n \rightarrow \infty} \frac{n}{m} \cdot \lim_{n \rightarrow \infty}  \frac{n}{n-m} =0 \cdot \frac{1}{z} \cdot \frac{1}{1-z}=0.$$

\end{proof}
{Now {given $0 < \vre< \delta$ set \eq{ze}{z:=1 -\frac{1-\delta}{1-\vre}\quad\Longleftrightarrow \quad \vre=1 -\frac{1-\delta}{1-z}  = \frac{\delta-z}{1-z} .} Lemma \ref{n-choose-k}, applied with $n$ replaced with $N$ and  $m$ replaced with $\left\lceil zN \right\rceil$,  then implies that there exists $N_0 = N_0(z) \in\N$ such that \eq{binomial}{{N \choose \left\lceil zN \right\rceil} \le  B\left(\frac{ \lceil zN \rceil}N\right)^N\quad \text{ for all }N\ge N_0.}}
Take  $0<r \le r_2/5$ and {$T_r$ as in \equ{Tr},} and let {$T>T_r$}. By combining Corollary~\ref{delta 2} with 
\equ{binomial} we get that
for any 
$N\ge N_0$ and any $0< \vre< \delta$, 
\eq{fincov}
{\begin{aligned}A_{x,\delta}(NT,O^c)\text{ can be covered with at most }\quad\\
C_1  e^{LN {{\lambda_{\max}}} T}  \cdot  \beta_{N} ^N\text{ balls in $P$ of radius }re^{- {{\lambda_{\max}}} NT},\end{aligned}
}
where
$$ \beta_{N}  :={B\left(\frac{ \lceil zN \rceil}N\right)} \cdot \left(\frac{C(T,O)}\vre\right)^{\frac{\lceil zN \rceil}{N}} \left(1 + c_0 e^{-{{\lambda_{\min}}} T}\right)^{1-\frac{\lceil zN \rceil  }{N} } .   $$
Note that we have
\eq{betaN}{\lim_{N \rightarrow \infty}   \beta_{N} =     B(z) \cdot  \left(\frac{C(T,O)}\vre\right)^{z}   \left(1 + c_0 e^{-{{\lambda_{\min}}} T}\right)^{1-z}   }
In view of \equ{ze}, \equ{fincov}, \equ{betaN} and Proposition \ref{lsup}, by applying Lemma \ref{Auxillary} with $e^{-{{\lambda_{\max}}} T}$ in place of {$\rho$, $L+ \frac{\log \beta_{N}}{{{\lambda_{\max}}} T}$ in place of $\alpha_N$ 
and $r$ in place of $C$,} we conclude that for any $0< z < \delta$ the Hausdorff dimension of the set $S_{x,\delta}(O)$ is bounded from above by 
$$\begin{aligned}
&L+ \frac{1}{{{{\lambda_{\max}}} T}}   {\log \left({B(z)}    \biggl(\frac{C(T,O)(1-z)}{\delta-z}\biggr)^{z}    \left(1 + c_0 e^{-{{\lambda_{\min}}} T}\right)^{1-z}\right)}\\
& \underset{ \equ{bx}}= L+ \frac{1}{{{{\lambda_{\max}}} T}}  {\log \left(   \biggl(\frac{C(T,O)(1-z)}{z(\delta-z)}\biggr)^{z}   \left(\frac{1 + c_0 e^{-{{\lambda_{\min}}} T}}{1-z}\right)^{1-z}\right).}
\end{aligned}$$

This shows that our objective should be to find $z\in (0,\delta)$ and $T > T_r$ such that the value of 
$$\frac1T\left(z \log   \Big(\frac{z(\delta-z)}{C(T,O)(1-z)}\Big)+ (1-z) \log  \Big(\frac{1-z}{1 + c_0 e^{-{{\lambda_{\min}}} T}}\Big)\right)$$
is the largest possible. We are going to approximate the maximum by first choosing $T$ in a convenient way. Take $T_0 \ge 1$ sufficiently large so that for any $T\ge T_0$ one has
\eq{T0}{c_0e^{-\lambda_{\min} T} < \frac{1}{T}}
{(note that $T_0$ depends only on $G$, $F_+$ and $P$)}, and set \eq{Tfinal}{ T:=\max \left(\frac{8T_r}{\mu ({\sigma _{5r}}O)}, T_0\right). }
Then
{
$${ \frac{1}{T}< \frac{1+T_r}{T} \le \frac{2T_r}{T} \le \frac{\mu ({\sigma _{5r}}O)}{4},        }$$}
which, in combination with  \equ{T0}, yields
$${ 1 + c_0 e^{-{{\lambda_{\min}}} T} \le 1+ \frac{1}{T} < 1+ \frac{\mu ({\sigma _{5r}}O)}{4}
}$$
and
$$
{
\begin{aligned}
C(T,O)& = 1-\mu ({\sigma _{5r}}O) +\frac{T_r+1}{T} +c_0 e^{-{{\lambda_{\min}}} T} \\
& \le  1-\mu ({\sigma _{5r}}O) +\frac{\mu ({\sigma _{5r}}O)}{4} +\frac{\mu ({\sigma _{5r}}O)}{4}  =   1-\frac{\mu ({\sigma _{5r}}O)}{2}.
\end{aligned}
}$$
Therefore  for $T$ as in \equ{Tfinal} the codimension of $S_{x,\delta}(O)$ is  for any $0< z < \delta$ bounded from below by 
$$
\frac{1}{{{{\lambda_{\max}}} T}}  \left(z \log   \Biggl(\frac{z(\delta-z)}{\left(1-\frac{\mu ({\sigma _{5r}}O)}{2}\right)(1-z)}\Biggr)+ (1-z) \log  \left(\frac{1-z}{1+ \frac{\mu ({\sigma _{5r}}O)}{4}}\right)\right).$$
Note that the second summand in the above expression is always negative; thus it makes sense to try choosing $0<z <\delta$ in a way that ensures that
the first summand is maximized and is positive if possible (this condition could prove to be unsuccessful for any $0< z <\delta$, contingent upon $\mu ({\sigma _{5r}}O)$ and $\delta$; in this case we will not achieve a dimension drop).

\ignore{By taking the logarithm of the above inequality, we obtain:
\eq{logsecond}
{\begin{aligned}
 \log C'(T,O) & \le \log \left(  1-\frac{\mu ({\sigma _{5r}}O)}{2}\right) \\
\\
& \le  -\frac{\mu ({\sigma _{5r}}O)}{2}
\end{aligned}}\bigskip
estimate the minimum of $$\left(\frac{C'(T,O)}{ c \cdot \vre} \right)^c  \cdot \left(\frac{1 + c_0 e^{-{{\lambda_{\min}}} T}}{1-c}\right)^{1-c}=\left(\frac{C'(T,O)}{\left(1- \frac{1-\delta}{1- \vre}\right) \cdot \vre} \right)^{\left(1- \frac{1-\delta}{1- \vre}\right)}  \cdot \left(\frac{1 + c_0 e^{-{{\lambda_{\min}}} T}}{\frac{1-\delta}{1- \vre}}\right)^{\frac{1-\delta}{1- \vre}}$$ over the interval $0<\vre< \delta$ in terms of $\delta$. For simplicity, denote this minimum by $m_\delta$.
Note that since $c<1$, we have $ \left(\frac{1 + c_0 e^{-{{\lambda_{\min}}} T}}{1-c}\right)^{1-c}>1$.
Hence, in order to obtain dimension drop from \equ{dim1}, we should choose $0<\vre <\delta$ in a way that ensures that
$\left(\frac{C'(T,O)}{ c \cdot \vre} \right)^c$ is minimized and is less than 1 if possible (this condition could prove to be unsuccessful for any $0< \vre <\delta$, contingent upon $C'(T, O)$ and $\delta$; in this case we won't achieve a dimension drop with the chosen $\delta$).}

We will solve the latter problem approximately by  finding $0<z< \delta$ which maximizes the ratio $\frac{z(\delta-z)}{1-z}$. An elementary calculus exercise shows that for that one should take   $z =1-\sqrt{1-\delta}$, so that  $\frac{z(\delta-z)}{1-z} = (1-\sqrt{1-\delta})^2$. Denoting $s = \sqrt{1-\delta}$ and $y = \mu ({\sigma _{5r}}O)$, we get an estimate
$$
\begin{aligned}
\codim S_{x,\delta}(O) \ge\  \ &\frac{1}{{{{\lambda_{\max}}} T}}  \left((1-s) \log   \biggl(\frac{(1-s)^2}{1-\frac{y}{2}}\biggr)+ s \log  \left(\frac{s}{1+ \frac{y}{4}}\right)\right)\\
\underset{\equ{defphi}}= \ &\frac{1}{{{{\lambda_{\max}}} T}} \phi(y,s).
\end{aligned}.
$$
It remains to observe that $$T\ \underset{\equ{Tfinal}}\ll \ \frac{8T_r}{\mu ({\sigma _{5r}}O)} \ \underset{\equ{Tr}}\ll \ \frac{\log \frac{1}{r}}{\mu ({\sigma _{5r}}O)}\  \underset{r\le \frac {r_2} {5}<1/40}{\ll} \ \frac{\log \frac{1}{5r}}{\mu ({\sigma _{5r}}O)};$$
thus \equ{finalestimate} follows, which ends the proof of Theorem  \ref{dimension drop 3}.}\end{proof}

\ignore{$\left(\frac{C'(T,O)}{ c \cdot \vre} \right)= \frac{C'(T,O)}{(1- \sqrt{1-\delta})^2}.$ $\vre= 1- \sqrt{1- \delta}$ implies $c= 1- \sqrt{1- \delta}$. So, by setting $\vre$ and $c$ both equal to $1- \sqrt{1- \delta}$ in $\left(\frac{C'(T,O)}{ c \cdot \vre} \right)^c  \cdot \left(\frac{1 + c_0 e^{-{{\lambda_{\min}}} T}}{1-c}\right)^{1-c}$, we conclude that 
\eq{upper}{m_\delta \le \left(\frac{C'(T,O)}{\left(1- \sqrt{1-\delta}\right)^2}\right)^{1-\sqrt{1-\delta}} \cdot \left(\frac{1 + c_0 e^{-{{\lambda_{\min}}} T}}{\sqrt{1-\delta}}\right)^{\sqrt{1-\delta}} }
On the other hand, since $0<c<1$, and as mentioned above the minimum of $\frac{C'(T,O)}{ c \cdot \vre}$ within the interval $0< \vre< \delta$ is equal to $\frac{C'(T,O)}{(1- \sqrt{1-\delta})^2}$, whenever $\frac{C'(T,O)}{ c \cdot \vre} <1$ (note that, as mentioned earlier this condition must be satisfied in order to observe dimension from \equ{dim1}) we have
$$m_\delta \ge \frac{C'(T,O)}{(1- \sqrt{1-\delta})^2} \cdot    \left({1 + c_0 e^{-{\lambda_{\min}T} }}\right)         $$
Considering this lower bound, it becomes evident that the error in our upper estimate for $m_\delta$ in \equ{upper} approaches zero as delta tends to 1. \\
By combining \equ{dim1} and \equ{upper}, we get that the set $S_x(x,\delta)$ has Hausdorff codimension at least
\eq{lowerest}{-\frac{1}{\lambda_{\max}T } \cdot \log \left( \left(\frac{C'(T,O)}{\left(1- \sqrt{1-\delta}\right)^2}\right)^{1-\sqrt{1-\delta}} \cdot \left(\frac{1 + c_0 e^{-{{\lambda_{\min}}} T}}{\sqrt{1-\delta}}\right)^{\sqrt{1-\delta}}        \right)} \bigskip

Now note that in view of \equ{Tfinal}, {$T>T_r$.} Therefore, by combining \equ{lowerest}, \equ{logfirst} and \equ{logsecond} we conclude that the set $S_{x, \delta}$ has Hausdorff codimension at least
$$
\begin{aligned}
&  -\frac{1}{{{{\lambda_{\max}}}  T}} \cdot \log \, \left( \left(\frac{C'(T,O)}{\left(1- \sqrt{1-\delta}\right)^2}\right)^{1-\sqrt{1-\delta}} \cdot \left(\frac{1 + c_0 e^{-{{\lambda_{\min}}} T}}{\sqrt{1-\delta}}\right)^{\sqrt{1-\delta}}  \right) \\
& = -\frac{1}{{{{\lambda_{\max}}} T}} \left(2(1- \sqrt{1- \delta}) \log \, \frac{1}{1- \sqrt{1- \delta}} + \sqrt{1- \delta} \log \, \frac{\left(1 + c_0 e^{-{{\lambda_{\min}}} T}\right)}{ \sqrt{1- \delta}}+  \log C'(T,O)  \right)        \\
& \ge \frac{1}{{{{\lambda_{\max}}} T}} \cdot \phi \left(\mu ({\sigma _{5r}}O), \sqrt{1-\delta} \right) \\
& \underset{\equ{Tfinal}}
{\gg} \frac{\mu ({\sigma _{5r}}O) \cdot {\phi \left(\mu ({\sigma _{5r}}O), \sqrt{1-\delta}\right)}}{ T_r}\\
& \underset{\equ{Tr}}{\gg} \frac{\mu ({\sigma _{5r}}O) \cdot {\phi \left(\mu ({\sigma _{5r}}O), \sqrt{1-\delta}\right)}}{ \log \frac{1}{r}}\\
& \underset{r\le \frac {r_2} {5}<1/40}{\gg}\frac{\mu ({\sigma _{5r}}O) \cdot {\phi \left(\mu ({\sigma _{5r}}O), \sqrt{1-\delta}\right)}}{ \log \frac{1}{5r}}.
\end{aligned}
$$}
\ignore{Now set $T= \frac{ 6(a''' +b''  \log \frac{1}{r})}{\phi(\mu ({\sigma _{r}}O), \delta)}$. It is easy to see that this $T$ maximizes the right-hand side of \equ{finh}. Moreover, in view of \equ{a'} and \equ{b'}, $T \ge 1$ and $T$  satisfies \equ{la}. 
 This, combined with \equ{finh} imply that for any $0 < \delta < \delta_O$, 
the set 
$S_{\delta,x, \frac{r}{{8  \sqrt{L} }}}^P$
has Hausdorff dimension at most
$$
\begin{aligned}
& L - \frac{{\phi(\mu ({\sigma _{r}}O), \delta)}^2}{12 L \l_{\max}(a''' +b''  \log \frac{1}{r})} \\
& \le L - \frac{{\phi(\mu ({\sigma _{r}}O), \delta)}^2}{12 L \l_{\max}\cdot \max(a''',b'') \log \frac{1}{r}} \\
& \underset{\equ{exponent},\equ{a'},\equ{b'}}{\le} L -\frac{\min({{\lambda_{\min}}}, \lambda)\cdot C_{P,\ell}}{\lambda_{\max} \cdot \max (a,b , \log E)}  \cdot \frac{{\phi(\mu ({\sigma _{r}}O), \delta)}^2}{ \log \frac{1}{r}},
\end{aligned}
$$
where
$$ C_{P,\ell}:=\frac{1}{12 L (4 \ell +2)}   \cdot \left( \max \left(\frac{\log \frac{2}{r'}}{2}, \log \frac{1}{c_1}, \log \left(4^\ell M_\ell M'_\ell+c \right) , \frac{\log \frac{K_P}{c_1}}{3}, L \right)+1 \right)^{-1}    $$
This finishes the proof of the theorem.}

\bibliographystyle{alpha}

\end{document}